\definecolor{MyLinkColor}{rgb}{0,0,0.4}
\newcommand{\R}{{\mathbb R}}
\newcommand{\Z}{{\mathbb Z}}
\newcommand{\N}{{\mathbb N}}
\newcommand{\s}{\mathbb{S}}
\newcommand{\cF}{\mathcal{F}}
\newcommand{\kL}{\mathcal{L}}
\newcommand{\cC}{\mathcal{C}}
\newcommand{\la}{\langle}
\newcommand{\ra}{\rangle}
\newcommand{\wt}{\widetilde}
\newcommand{\ov}{\overline}
\newcommand{\un}{\underline}
\newcommand{\p}{\partial}
\newcommand{\e}{\varepsilon}
\newcommand{\0}{\Omega}
\newcommand{\G}{\Gamma}
\newcommand{\tr}{\mathop{\rm tr}\nolimits}
\newcommand{\spa}{\mathop{\rm span}\nolimits}
\newcommand{\im}{\mathop{\rm Im}\nolimits}
\newcommand{\ke}{\mathop{\rm Ker}\nolimits}
\newtheorem{thm}{Theorem}[section]
\newtheorem{prop}[thm]{Proposition}
\newtheorem{defn}[thm]{Definition}
\newtheorem{lemma}[thm]{Lemma}
\theoremstyle{remark} 
\newtheorem{rem}[thm]{Remark}
\numberwithin{equation}{section}   
\title[Steady water waves with unbounded vorticity]{Steady periodic water waves with unbounded  vorticity: equivalent formulations and existence results}
\author[C. I. Martin]{Calin Iulian Martin}
\address{Institut  f\" ur Mathematik, Universit\" at Wien, Nordbergstra{\ss}e 15,
1090 Wien, Austria}
\email{calin.martin@univie.ac.at}
\author[B.--V. Matioc]{Bogdan--Vasile Matioc}
\address{Institut f\" ur Mathematik, Universit\" at Wien, Nordbergstra{\ss}e 15,
1090 Wien, Austria}
\email{bogdan-vasile.matioc@univie.ac.at}
\subjclass[2010]{35J60, 76B03, 76B15, 76B45,  47J15}
\keywords{Equivalent formulations; local bifurcation; unbounded vorticity; gravity waves;   capillary-gravity waves; capillary waves}
\begin{document}

\begin{abstract}
In this paper we consider the steady water wave problem for waves that possess a merely  $L_r-$integrable   vorticity, with $r\in(1,\infty)$ being arbitrary.   
We first establish  the equivalence of the three formulations--the velocity formulation, the stream function formulation, and the height function formulation--
in the setting of strong solutions, regardless of the value of $r$. 
Based upon this result and using a suitable  notion of  weak solution for the height function formulation, we then establish, by means of local bifurcation theory, the existence of small amplitude capillary and capillary-gravity
water waves with a  $L_r-$integrable vorticity.
\end{abstract}

\maketitle

\section{Introduction}\label{Sec:1}

We consider the classical problem of traveling waves that propagate at the surface of a two-dimensional
inviscid and  incompressible fluid of finite depth.
Our setting is general enough to incorporate the case when the vorticity of the  fluid is merely $L_r-$integrable, with   $r>1$ being arbitrary.
The existence of solutions of the Euler equations  in   $\R^n$ describing flows with an unbounded vorticity distribution has been 
addressed lately by several authors, cf. \cite{ Ke11, MB02, Vi00} and the references therein,  
whereas  for traveling free surface  waves  in two-dimensions  there are so far no
existence results which allow for a merely $L_r$-integrable vorticity.   

In our setting, the hydrodynamical problem is modeled by the steady Euler equations,  to which we refer to as the {\em velocity formulation}. 
For classical solutions and in the absence of stagnation points, 
there are two equivalent formulations, namely the {\em stream function} and the {\em height function} formulation, the latter being related to the semi-Lagrangian  Dubreil-Jacotin transformation. 
This equivalence property stays at the basis of the existence results of classical solutions with general H\"older continuous vorticity, cf. \cite{CoSt04} for gravity waves and \cite{W06b, W06a} for waves with capillarity.
Very recent, taking advantage of the  weak 
formulation of the governing equations, it was rigorously established that there exist  gravity  waves \cite{CS11} and capillary-gravity waves \cite{CM13xx, MM13x} with a discontinuous and bounded vorticity.
The waves found  in the  latter references are obtained in the setting of strong solutions when the equations of motion are satisfied in $L_r,$ $r>2$ in  \cite{CS11}, respectively $L_\infty$ in \cite{CM13xx, MM13x}. 
The authors of \cite{CS11} also prove  the  equivalence of the formulations in the setting of $L_r$-solutions, under the restriction that $r>2$.
Our first main result, Theorem \ref{T:EQ}, establishes the equivalence of the three formulations for strong solutions that possess Sobolev  and weak H\"older regularity.
For this we rely on the regularity properties of such solutions, cf. \cite{AC11, EM13x, MM13x}.
This equivalence holds for gravity, capillary-gravity, and pure capillary waves without stagnation points and with a $L_r$-integrable vorticity, without making any restrictions on $r\in(1,\infty).$ 

The equivalence result Theorem \ref{T:EQ} stays at the basis of our second main result, Theorem \ref{T:MT}, where we establish the existence of small amplitude capillary and capillary-gravity water waves having  a 
$L_r-$integrable vorticity distribution for any $r\in(1,\infty).$
On physical background, studying waves with an unbounded vorticity is relevant in the setting of small-amplitude  wind generated waves, when  
capillarity plays an important role. 
These waves may possess a shear layer of high vorticity adjacent to the wave surface \cite{O82, PB74}, fact which motivates us to consider 
unbounded vorticity distributions.
{Moreover, an unbounded  vorticity at  the bed is also  physically relevant, for example when describing turbulent flows along smooth channels (see the empirical law on page 109 in \cite{B62}).}

In contrast to the  irrotational case when, in the absence of an underlying current,  the qualitative features of the flow are well-understood 
  \cite{C12, Co06, DH07}, 
in the presence of a underlying, even uniform \cite{CoSt10, HH12, SP88},  current  many aspects of the flow are more difficult to study, or are even untraceable, and one has to rely often 
on numerical simulations, cf. \cite{KO1, KO2, SP88}.
   For example, by allowing for a discontinuous vorticity, the latter studies display the influence of a favorable or adverse wind on 
   the amplitude of the waves, or describe    extremely high rotational waves
and the the flow pattern  of waves with eddies. 

The rigorous existence of waves with capillarity was obtained first in the setting of irrotational waves \cite{MJ89, JT85, JT86,  RS81} and it was only recently 
extended to  the setting of waves with constant vorticity and stagnation points \cite{ CM13a,  CM13, CM13x}   (see also \cite{CV11}).           
 In the context of waves with a general H\"older continuous \cite{W06b, W06a} or discontinuous \cite{CM13xx, MM13x} vorticity 
the existence results are obtained by using the height function formulation and  concern only small amplitude
waves without stagnation points. 
 Theorem \ref{T:MT}, which is the first rigorous existence result for waves with unbounded vorticity, is obtained by taking advantage of the 
weak interpretation  of the height function formulation.
 More precisely, recasting the nonlinear second-order boundary condition on the surface into a nonlocal  and nonlinear equation of order zero 
  enables us to introduce the
 notion of weak  (which is  shown to be strong) solution for the problem in a suitable  analytic setting.
By means of local bifurcation theory and ODE techniques  we then find local real-analytic curves consisting, with the exception of a single  laminar flow solution, only of non-flat 
symmetric capillary (or capillary-gravity) water waves.   
The methods we apply are facilitated by the presence of capillary effects (see e.g. the proof of Lemma \ref{L:4}), though not on the value of the surface tension coefficient,  
the existence question for pure gravity waves with  unbounded vorticity being left as an open
problem.

 The outline of the paper is as follows: we  present in Section \ref{Sec:2}  the mathematical setting and establish the equivalence of the formulations  in Theorems \ref{T:EQ}. 
We end the section by stating our main existence result Theorem \ref{T:MT}, the Section \ref{Sec:3} being dedicated to its proof.

\section{Classical formulations of  the steady water wave problem and the main results}\label{Sec:2}

Following a steady periodic wave from a reference frame which moves in the same direction as the wave and with the same speed $c$,
 the equations of motion  are    the steady-state Euler equations
\begin{subequations}\label{eq:P}
  \begin{equation}\label{eq:Euler}
\left\{
\begin{array}{rllll}
({u}-c) { u}_x+{ v}{ u}_y&=&-{ P}_x,\\
({ u}-c) { v}_x+{ v}{ v}_y&=&-{ P}_y-g,\\                                 
{ u}_x+{v}_y&=&0
\end{array}
\right.\qquad \text{in $\0_\eta,$}
\end{equation}
with $x$ denoting the direction of wave propagation and $y$ being the height coordinate.
We  assumed that the free surface of the wave is the graph $y=\eta(x),$ that the fluid has constant unitary density,  and that the flat fluid bed is located at $y=-d$. 
Hereby, $\eta$ has zero integral mean over a period and $d>0$ is the average mean depth of the fluid.   
Moreover, $\0_\eta $ is the two-dimensional fluid domain   
\[
\0_\eta:=\{(x,y)\,:\,\text{$ x\in\s $ and $-d<y<\eta(x)$}\},
\]
with  $\s:=\R/(2\pi\Z)$ denoting the unit circle.
This notation expresses the $2\pi$-periodicity  in   $x $ of $\eta,$ of the velocity field $(u, v), $ and of the pressure $P$.  
  The equations  \eqref{eq:Euler} are supplemented by the following boundary conditions
\begin{equation}\label{eq:BC}
\left\{
\begin{array}{rllll}
 P&=&{P}_0-\sigma\eta''/(1+\eta'^2)^{3/2}&\text{on $ y=\eta(x)$},\\
 v&=&({ u}-c) \eta'&\text{on $ y=\eta(x)$},\\
 v&=&0 &\text{on $ y=-d$},
\end{array}
\right.
\end{equation}
the first relation being a consequence of Laplace-Young's equation which  states that the pressure jump across an interface is proportional to the mean curvature of the  interface.
We used   $ P_0$ to denote  the constant atmospheric pressure and $\sigma>0$  is the surface tension coefficient. 
Finally, the  vorticity of the flow is the scalar function
\begin{equation*}
\omega:= { u}_y-{ v}_x\qquad\text{in $\0_\eta$.}  
\end{equation*}
\end{subequations}

The velocity formulation \eqref{eq:P} can be re-expressed in terms of the  stream function $\psi $, which is introduced via the relation $\nabla \psi=(-v,u-c)$ in $\0_\eta$, cf. Theorem \ref{T:EQ}, and it becomes a   free boundary problem
\begin{equation}\label{eq:psi}
\left\{
\begin{array}{rllll}
\Delta \psi&=&\gamma(-\psi)&\text{in}&\0_\eta,\\
\displaystyle|\nabla\psi|^2+2g(y+d)-2\sigma\frac{\eta''}{(1+\eta'^2)^{3/2}}&=&Q&\text{on} &y=\eta(x),\\
\psi&=&0&\text{on}&y=\eta(x),\\
\psi&=&-p_0&\text{on} &y=-d.
\end{array}
\right.
\end{equation}
Hereby, the constant $p_0<0$ represents the relative mass flux,  $Q\in\R$ is related to the total head,
and the function $\gamma:(p_0,0)\to\R$ is the vorticity function, that is 
\begin{equation}\label{vor}
\omega(x,y)=\gamma(-\psi(x,y)) 
\end{equation}
for  $(x,y)\in \0_\eta.$
The equivalence of the velocity formulation \eqref{eq:P} and of the stream function formulation \eqref{eq:psi} in the setting of classical solutions without stagnation points, that is when
 \begin{equation}\label{SC}
u-c<0\qquad\text{in $\ov \0_\eta$}
\end{equation}
has been established   in \cite{Con11, CoSt04}.
We emphasize that the assumption \eqref{SC} is crucial when proving the existence of the vorticity function $\gamma$.
Additionally,  the condition \eqref{SC} guarantees in the classical setting considered in these references that the semi-hodograph transformation
 $\Phi:\ov\0_\eta\to\ov\0$  given by
\begin{equation}\label{semH}
\Phi(x,y):=(q,p)(x,y):=(x,-\psi(x,y))\qquad \text{for $(x,y)\in\ov\0_\eta$},
\end{equation}
where $\0:=\s\times(p_0,0),$ 
is a diffeomorphism. 
This property is   used to show that the  previous two formulations \eqref{eq:P} and \eqref{eq:psi} can be re-expressed in terms of the so-called
  height function   $h:\ov \0\to\R$ defined by  
\begin{equation}\label{hodo}
h(q,p):=y+d \qquad\text{for $(q,p)\in\ov\0$}.
\end{equation}                                                 
 More precisely, one obtains a quasilinear elliptic  boundary value problem  
\begin{equation}\label{PB}
\left\{
\begin{array}{rllll}
(1+h_q^2)h_{pp}-2h_ph_qh_{pq}+h_p^2h_{qq}-\gamma h_p^3&=&0&\text{in $\0$},\\
\displaystyle 1+h_q^2+(2gh-Q)h_p^2-2\sigma \frac{h_p^2h_{qq}}{(1+h_q^2)^{3/2}}&=&0&\text{on $p=0$},\\
h&=&0&\text{on $ p=p_0,$}
\end{array}
\right.
\end{equation}
the condition \eqref{SC} being re-expressed as
\begin{equation}\label{PBC}
 \min_{\ov \0}h_p>0.
\end{equation}

The equivalence of the three formulations \eqref{eq:P}, \eqref{eq:psi}, and \eqref{PB} of the water wave problem, when the vorticity is only $ L_r-$integrable, has not been established yet for the full range $r\in(1,\infty)$.
In the context of strong solutions, when the equations of motion are assumed to hold in $L_r$, there is a recent result \cite[Theorem 2]{CS11}  established in the absence of capillary forces. 
This result though is restricted to the  case when $r>2,$ this condition being related to  Sobolev's embedding $W^2_r\hookrightarrow C^{1+\alpha}$ in two dimensions.  
In the same context, but for  solutions that possess weak H\"older regularity, there is a further equivalence result \cite[Theorem 1]{VZ12}, but again one has  to restrict the  range of H\"older exponents.
Our equivalence result, cf. Theorem \ref{T:EQ}  and Remark \ref{R:-2} below, is true for all $r\in(1,\infty)$ and was obtained in the setting of strong solutions that possess, additionally to Sobolev regularity,   weak H\"older regularity, the 
 H\"older exponent being related in our context to Sobolev's embedding   in only one dimension.
 This  result enables us to establish, cf. Theorem \ref{T:MT} and Remark \ref{R:0},  the existence of small-amplitude capillary-gravity and pure capillary  water waves with $L_r-$integrable vorticity function for any $r\in(1,\infty).$

We   denote in the following by $\tr_0$ the trace operator with respect to the boundary component $p=0$ of $\ov\0,$ that is $\tr_0v=v(\cdot,0)$ for all $v\in C(\ov\0).$
In the following, we use several times  the following product formula 
\begin{equation}\label{PF}
\qquad  \p(uv)=u\p v+v\p u\qquad\text{for all $u,v\in W^1_{1,loc}$ with $uv, u\p v+v\p u\in L_{1,loc},$}
\end{equation}
cf. relation (7.18) in \cite{GT01}.

\begin{thm}[Equivalence of the three formulations]\label{T:EQ}
Let $r\in(1,\infty)$ be given and define  $\alpha=(r-1)/r\in(0,1).$
Then, the following are equivalent
\begin{itemize}
\item[$(i)$] the height function formulation together with  \eqref{PBC} for $h\in C^{1+\alpha}(\ov\0)\cap W^2_r(\0)$ such that $\tr_0 h \in W^2_r(\s)$, and $\gamma\in L_r((p_0,0))$;
\item[$(ii)$] the stream function formulation for $\eta\in W^2_r(\s)$,  $\psi\in C^{1+\alpha}(\ov\0_\eta)\cap W^2_r(\0_\eta)$ satisfying $\psi_y<0$ in $\ov\0_\eta$, and  $\gamma\in L_r((p_0,0))$;
\item[$(iii)$] the velocity formulation together with \eqref{SC} for $u,v, P\in C^{\alpha}(\ov\0_\eta)\cap W^1_r(\0_\eta),$   and $\eta\in W^2_r(\s).$
 \end{itemize}
 \end{thm}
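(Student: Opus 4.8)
The plan is to prove the three-way equivalence by establishing the implications cyclically, say $(iii)\Rightarrow(ii)\Rightarrow(i)\Rightarrow(iii)$, following the classical Dubreil-Jacotin route but tracking regularity carefully at the level of Sobolev and weak H\"older spaces. The essential point that makes the argument work for all $r\in(1,\infty)$ is the choice $\alpha=(r-1)/r$, which is precisely the exponent furnished by the one-dimensional Sobolev embedding $W^1_r(\s)\hookrightarrow C^\alpha(\s)$; this is what lets us convert the $W^2_r$ and $W^1_r$ bulk regularity of $\psi$, $h$, and $(u,v,P)$ into the $C^{1+\alpha}$ (resp. $C^\alpha$) regularity needed to make pointwise sense of the nonstagnation conditions \eqref{SC} and \eqref{PBC}.

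First I would treat $(iii)\Rightarrow(ii)$. Given the velocity formulation with \eqref{SC}, the divergence-free condition $u_x+v_y=0$ together with the kinematic boundary conditions in \eqref{eq:BC} allows me to define the stream function $\psi$ via $\nabla\psi=(-v,u-c)$ up to an additive constant; I fix the constant so that $\psi=0$ on $y=\eta(x)$, which forces $\psi=-p_0$ on $y=-d$ with $p_0:=\int$ of the flux. The regularity $u,v\in C^\alpha\cap W^1_r$ gives $\psi\in C^{1+\alpha}\cap W^2_r(\0_\eta)$, and \eqref{SC} reads exactly $\psi_y=u-c<0$. The vorticity relation $\omega=u_y-v_x=\Delta\psi$ combined with the nonstagnation condition is what permits one to write $\omega$ as a function of $-\psi$ alone; here I expect the crux to be showing that $\gamma$ defined by \eqref{vor} is genuinely well-defined and lies in $L_r((p_0,0))$ rather than merely on $\0_\eta$. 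The standard argument invokes Bernoulli's law and the fact that the level sets of $\psi$ coincide with streamlines, so that $\omega$ is constant along them; the weak H\"older regularity is needed to make the level-set foliation honest and to apply the change of variables. The reverse reconstruction $(ii)\Rightarrow(iii)$ is more direct, setting $u=c+\psi_y$, $v=-\psi_x$ and recovering $P$ by integrating the Euler equations (which are consistent by the Bernoulli relation encoded in the second line of \eqref{eq:psi}).

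For $(ii)\Leftrightarrow(i)$ I would use the semi-hodograph transformation \eqref{semH} and the height function \eqref{hodo}. The condition $\psi_y<0$ guarantees, via the inverse function theorem in the $C^{1+\alpha}$ category, that $\Phi$ is a bi-Lipschitz $C^{1+\alpha}$-diffeomorphism from $\ov\0_\eta$ onto $\ov\0$, and I would verify that $h\in C^{1+\alpha}(\ov\0)\cap W^2_r(\0)$ with $\min_{\ov\0}h_p>0$ equivalent to \eqref{SC}. The delicate computation is the transformation of the second-order interior PDE and, especially, of the curvature boundary term $\sigma\eta''/(1+\eta'^2)^{3/2}$ into the quasilinear boundary condition on $p=0$ in \eqref{PB}; this uses $\tr_0 h\in W^2_r(\s)$ to make sense of $h_{qq}$ on the boundary and the product formula \eqref{PF} to justify the chain-rule manipulations at only $W^1_r\cap C^\alpha$ regularity, where classical calculus rules would otherwise fail. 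I expect the main obstacle to be precisely this low-regularity bookkeeping: every differentiation identity relating $\psi$-derivatives to $h$-derivatives must be justified weakly (via \eqref{PF}) and then upgraded pointwise using the H\"older continuity, and one must confirm that the transformed equations hold in $L_r$ rather than merely distributionally. Once the nonlinear boundary relation is correctly transformed and the interior equation verified, the equivalence of the two free-boundary problems follows, completing the cycle.
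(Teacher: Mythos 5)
Your global strategy---the four implications $(iii)\Rightarrow(ii)$, $(ii)\Rightarrow(iii)$, $(ii)\Leftrightarrow(i)$ via the Dubreil-Jacotin change of variables, with $\alpha=(r-1)/r$ coming from the one-dimensional Sobolev embedding and with \eqref{PF} doing the low-regularity bookkeeping---is the same as the paper's, and much of what you outline can be carried through. The genuine gap sits exactly in the step you yourself call the crux of $(iii)\Rightarrow(ii)$: the existence of a vorticity function $\gamma\in L_r((p_0,0))$ with $\omega=\gamma(-\psi)$. You propose to obtain it from ``the standard argument'' (Bernoulli's law plus constancy of $\omega$ along the level sets of $\psi$), aided by the weak H\"older regularity. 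That argument is not available at this regularity: the classical proof takes the curl of the Euler equations to get $\nabla\omega\cdot(u-c,v)=0$ pointwise and then restricts $\omega$ to individual streamlines, but when $\omega=u_y-v_x$ is merely an $L_r$ function it has no gradient and, more fundamentally, no trace on streamlines---these are null sets, so ``$\omega$ is constant along them'' has no meaning, and no H\"older regularity of $(u,v)$ repairs this, since it is $\omega$ itself, not the foliation, that lacks pointwise values. What is needed (and what the paper does, following \cite{BM11}) is a distributional replacement: one proves $\p_q(\omega\circ\Phi^{-1})=0$ in $\mathcal{D}'(\0)$, with $\Phi$ the semi-hodograph map \eqref{semH}, by testing against $\phi\in C^\infty_0(\0)$, pulling the integral back to $\0_\eta$, using \eqref{PF} to see that $(u-c)^2$ and $v^2$ lie in $W^1_r(\0_\eta)$ so that the quadratic terms can be integrated by parts after a density argument, and then substituting the momentum equations \eqref{eq:Euler} to reduce everything to $\int_{\0_\eta}\bigl(-(P_y+g)\wt\phi_x+P_x\wt\phi_y\bigr)\,d(x,y)=0$. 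Only then can one conclude that $\omega\circ\Phi^{-1}$ is a $q$-independent function $\gamma\in L_r((p_0,0))$. Without this device, or an equivalent one, the implication $(iii)\Rightarrow(ii)$---and with it the whole equivalence---is not proved.

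A secondary point, a divergence rather than a gap: for $(ii)\Rightarrow(i)$ you propose to get $h\in W^2_r(\0)$ directly, by writing $h_q=-(\psi_x/\psi_y)\circ\Phi^{-1}$, $h_p=-(1/\psi_y)\circ\Phi^{-1}$ and using that composition with the bi-Lipschitz map $\Phi^{-1}$ preserves $W^1_r$. This can be made rigorous (via \cite[Theorem 7.8]{GT01} and the chain rule for Sobolev functions under bi-Lipschitz changes of variables) and is in this respect more self-contained than the paper, which instead shows that $h$ is a weak solution in the sense of Definition \ref{D:1} and invokes the regularity theorem \cite[Theorem 5.1]{MM13x} to upgrade it; the paper's detour is strategic, since the statement that every $C^{1+\alpha}$ weak solution is strong is precisely what its existence proof (Theorem \ref{T:MT}) consumes later. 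Note also that in the converse direction $(i)\Rightarrow(ii)$ the paper leans on the cited regularity results ($\eta\in C^\omega(\s)$) together with elliptic regularity to conclude $\psi\in W^2_r(\0_\eta)$; if you follow your direct route, you must supply the corresponding composition argument there as well, which your sketch does not spell out.
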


 \begin{rem}\label{R:-2}
  Our equivalence result is true for both  capillary and  capillary-gravity water waves.
  Moreover, it is also true for pure gravity waves when the proof  is similar, with modifications just when proving that $(iii)$ implies $(i)$: 
instead of using  \cite[Theorem 5.1]{MM13x} one has to  rely on the corresponding regularity 
  result  established for gravity waves, cf. Theorem 1.1 in \cite{EM13x}.  
  
  We emphasize also that the condition $\tr_0 h \in W^2_r(\s)$ requested at $(i)$ is not a restriction. 
  In fact, as a consequence of $h\in C^{1+\alpha}(\ov\0)\cap W^2_r(\0)$ being a strong solution of \eqref{PB}-\eqref{PBC} for $\gamma\in L_r((p_0,0))$, we know that the wave surface and  all the other streamlines 
 are real-analytic curves,
  cf. \cite[Theorem 5.1]{MM13x}
  and \cite[Theorem 1.1]{EM13x}. 
  Particularly, $\tr_0 h$ is a real-analytic function, i.e. $\tr_0 h\in C^\omega(\s)$.
  Furthermore,   in view of the same references, all  weak solutions  $h\in C^{1+\alpha}(\ov\0)$ of \eqref{PB}, cf. Definition \ref{D:1} (or \cite{CS11} for gravity waves), satisfy $h \in W^2_r(\s)$.
 \end{rem}

\begin{proof}[Proof of Theorem \ref{T:EQ}] Assume first $(i)$ and let
 \begin{align}\label{DEF}
  d:=\frac{1}{2\pi}\int_0^{2\pi} \tr_0 h\, dq\in(0,\infty)\qquad\text{and}\qquad \eta:=\tr_0 h-d\in W^2_r(\s). 
 \end{align} 
We prove that there exists a unique function $\psi\in C^{1+\alpha}(\ov\0_\eta)$ with the property that 
\begin{align}\label{PP1}
 y+d-h(x,-\psi(x,y))=0\qquad\text{for all $(x,y)\in\ov\0_\eta.$}
\end{align}
 To this end, let $H:\s\times\R\to\R$ to be a continuous extension of $h$ to $\s\times\R$, having the property that $H(q,\cdot)\in C^1(\R)$ is strictly increasing  and has a bounded derivative for all $q\in\s.$ 
Moreover, define the function $F:\s\times\R\times\R\to \R$ by setting
\begin{align*}
 F(x,y,p)=y+d-H(x,p).
\end{align*}
For every fixed $x\in\s$, we have 
\[\text{$F(x,\cdot,\cdot)\in C^1(\R\times\R,\R),$ \quad $F(x,\eta(x),0)=0$,\quad and $F_p(x,\cdot,\cdot)=-H_p(x,\cdot)<0$. }\]
Using the implicit function theorem, we find a  $C^1-$function $\psi(x,\cdot):(\eta(x)-\e,\eta(x)+\e)\to\R$ with the property that
\begin{align*}
 F(x,y,-\psi(x,y))=0 \qquad\text{for all $y\in (\eta(x)-\e,\eta(x)+\e)$}.
\end{align*}
As $\psi_y(x,y)=1/F_p(x,y,-\psi(x,y))$, we deduce that $\psi(x,\cdot) $ is a strictly decreasing function which maps, due to the boundedness of $H_p(x,\cdot)$, bounded intervals onto bounded intervals. 
Therefore, $\psi(x,\cdot) $ can be defined on  $(-\infty,\eta(x)]$.
In view of $F(x,-d,p_0)=0,$ we get that $\psi(x,-d)=-p_0$ for each $x\in\s.$
Observe also that, due to the periodicity of $H$ and $\eta$, $\psi$ is $2\pi-$periodic with respect to $x$, while, because use of  $F\in C^1(\s\times\R\times[p_0,0]),$ we have $\psi\in C^1(\0_\eta)$.
Since the relation \eqref{PP1} is satisfied in $\ov\0_\eta$, it is easy to see now that in fact $\psi\in C^{1+\alpha}(\ov\0_\eta).$

In order to show that $\psi$ is the desired stream function, we prove that $\psi\in W^2_r(\0_\eta).$
Noticing  that the relation \eqref{PP1} yields
\begin{align}\label{RE1}
 \psi_y(x,y)=-\frac{1}{h_p(x,-\psi(x,y))}\qquad\text{and}\qquad \psi_x(x,y)=\frac{h_q(x,-\psi(x,y))}{h_p(x,-\psi(x,y))}
\end{align}
in $\ov\0_\eta,$    the variable transformation \eqref{semH}, integration by parts, and the fact that $h$ is a strong solution of \eqref{PB} yield  
\begin{align*}
\Delta\psi[\wt\phi]=& -\int_{\0_\eta}\left(\psi_y\wt\phi_y+\psi_x\wt\phi_x\right)\, d(x,y)=-\int_{\0}\left(h_q\phi_q-\frac{1+h_q^2}{h_p}\phi_p\right)\, d(q,p)\\
 =&\int_\0\left(h_{qq}-\frac{2h_qh_{pq}}{h_p}+\frac{(1+h_q^2)h_{pp}}{h_p^2}\right)\phi\, d(q,p)=\int_\0(\gamma  \phi)h_p\, d(q,p) \\
=&\int_{\0_\eta} \gamma(-\psi)\wt \phi \, d(x,y)
\end{align*}
 for all $\wt \phi\in C^\infty_0(\0_\eta),$ whereby we set  $\phi:=\wt\phi\circ\Phi^{-1}.$
This shows that $\Delta\psi=\gamma(-\psi)\in L_r(\0_\eta)$. 
Taking into account that $\psi(x,y)=p_0(y-\eta(x))/(d+\eta(x))$ for  $(x,y)\in\p\0_\eta,$ whereby in fact $\eta\in C^\omega(\s),$ c.f. Remark \ref{R:-2},
we find by elliptic regularity, cf. e.g. \cite[Theorems 3.6.3 and 3.6.4]{CW98}, that $\psi\in W^2_r(\0_\eta).$ 
It is also easy to see that $(\eta,\psi)$ satisfy also the second relation of \eqref{eq:psi}, and this completes our arguments in this case.

We now show that  $(ii)$ implies $(iii)$.
To this end, we define
 \begin{align}\label{PP2}
   (u-c,v)&:=(\psi_y,-\psi_x)\qquad\text{and}\qquad P:=-\frac{|\nabla\psi|^2}{2}-g(y+d)-\Gamma(-\psi)+P_0+\frac{Q}{2}, 
  \end{align}
  where $\Gamma$ is given by
  \begin{equation}\label{E:G}\Gamma(p):=\int_0^p\gamma(s)\, ds\qquad\text{for $p\in[p_0,0].$}\end{equation}
 Clearly, we have that  $u,v\in  C^{\alpha}(\ov\0_\eta) \cap W^1_r(\0_\eta)  $ and  $\Gamma\in C^\alpha([p_0,0])\cap W^1_r((p_0,0)).$
Moreover, because $\psi\in  C^{1+ \alpha}(\ov\0_\eta)\cap W^2_r(\0_\eta),$ the  formula \eqref{PF} shows that $|\nabla\psi|^2\in W^1_r(\0_\eta),$
and therefore also $P\in  C^{\alpha}(\ov\0_\eta)\cap W^1_r(\0_\eta).$
The boundary conditions \eqref{eq:BC} are easy to check.
Furthermore, the conservation of mass equation  is a direct consequence of the first relation of \eqref{PP2}. 
We are left with the conservation of momentum equations.
Therefore, we observe   the function  $\Gamma(-\psi)$ is differentiable almost everywhere and its partial  derivatives belong to $L_r(\0_\eta)$, meaning that $\Gamma(-\psi)\in W^1_r(\0_\eta)$, cf. \cite{DD12},
the gradient $\nabla(\Gamma(-\psi))$ being determined by  the chain rule. 
  Taking now the weak derivative with respect to $x$ and $y$ in the second equation of \eqref{PP2}, respectively, we obtain in view of \eqref{PF}, the conservation of momentum equations.
  
  We now  prove that $(iii)$ implies $(ii)$.
  Thus, choose $u,v, P\in C^{\alpha}(\ov\0_\eta)\cap W^1_r(\0_\eta) $  and  $\eta\in W^2_r(\s) $   such that $(\eta, u-c, v,P)$ is a solution of the velocity formulation.
  We define
  \begin{equation}
   \psi(x,y):=-p_0+\int_{-d}^{y} (u(x,s)-c)\, ds\qquad\text{for $(x,y)\in\ov\0_\eta,$}
  \end{equation}
with $p_0$ being a negative constant.
  It is not difficult to see that the function $\psi$ belongs to $ C^{1+\alpha}(\ov\0_\eta)\cap W^2_r(\0_\eta) $  and that it satisfies $\nabla\psi=(-v,u-c).$
The latter relation allows us to pick $p_0$ such that $\psi=0$ on $y=\eta(x).$
Also, we have that $\psi=-p_0$ on the fluid bed.
We next show that the  vorticity of the flow satisfies the relation \eqref{vor} for some $\gamma\in L_r((p_0,0)).$  
To this end, we proceed as in \cite{ BM11} and use the property that the mapping $\Phi$ given by \eqref{semH} is an isomorphism of class $C^{1+\alpha}$ to compute that
\begin{align*}
 \p_q (\omega\circ \Phi^{-1})[ \phi]=&\int_{\0_\eta} (v_x-u_y)((u-c) \wt\phi_x+v\wt\phi_y)\, d(x,y)
\end{align*}
for all $ \phi\in C^\infty_0(\0).$ 
Again, we set $\wt\phi :=\phi\circ\Phi\in C^{1+\alpha}_0(\ov\0_\eta).$
Since our assumption $(iii)$ implies that $(u-c)^2$ and $v^2$ belong to $W^1_r(\0_\eta)$, cf. \eqref{PF},    density arguments, \eqref{eq:Euler},  and integration by parts yield
\begin{align*}
 \p_q (\omega\circ \Phi^{-1})[ \phi]=&\int_{\0_\eta} ((u-c)v_x+vv_y)\wt\phi_x\, d(x,y)-\int_{\0_\eta} ((u-c)u_x+vu_y)\wt\phi_y\, d(x,y)\\[1ex]
=&-\int_{\0_\eta} (P_y+g)\wt\phi_x\, d(x,y)+\int_{\0_\eta} P_x\wt\phi_y\, d(x,y)=0.
\end{align*}
Consequently,  there exists $\gamma\in L_r((p_0,0))$ with the property that $\omega\circ\Phi^{-1}=\gamma$   almost everywhere in $\0$. 
This shows that \eqref{vor} is satisfied in $L_r(\0_\eta).$
Next, we observe that the same arguments used when proving that $(ii)$ implies $(iii)$ yield that the energy 
\[
E:=P+\frac{|\nabla\psi|^2}{2}+g(y+d)+\Gamma(-\psi) 
\]
is constant in $\ov\0_\eta.$
Defining $Q:=2(E-P_0),$ one can now easily see that $(\eta,\psi)$ satisfies \eqref{eq:psi}, and we have established $(ii)$.

In the final part of the proof we assume that $(ii)$ is satisfied and we prove  $(i)$.  
Therefore, we let  $h:\ov\0\to\R$ be the mapping defined by \eqref{hodo} (or equivalently \eqref{PP1}).
Then, we get that $ h\in C^{1+\alpha}(\ov\0)$ verifies the relations \eqref{PP1} and  \eqref{RE1}.
Consequently, $\tr_0 h\in W^2_r(\s)$ and  one can easily see that the boundary conditions of \eqref{PB} and \eqref{PBC} are satisfied.
In order to show that $h$ belongs to $W^2_r(\0)$ and it also solves the first relation of \eqref{PB}, we  observe that the first equation of \eqref{eq:psi} can be written in the equivalent form
\begin{equation}\label{eq:psi2}
(\psi_x\psi_y)_x+\frac{1}{2}\left(\psi_y^2-\psi_x^2\right)_y+(\Gamma(-\psi))_y=0\qquad\text{in $L_r(\0_\eta)$}.
\end{equation}
Therewith and  using the change of variables \eqref{semH}, we find   
\begin{align*}
 &\int_\0\frac{h_q}{h_p} \phi_q-\left(\Gamma+\frac{1+h_q^2}{2h_p^2}\right)\phi_p\,  d(q,p)\\[1ex]
 &=-\int_{\0_\eta}\left((\psi_x\psi_y)_x+\frac{1}{2}\left(\psi_y^2-\psi_x^2\right)_y+(\Gamma(-\psi))_y\right)\wt\phi\, d(x,y)=0,
\end{align*}
for all $\phi\in C^1_0(\0)$ and with $\wt\phi :=\phi\circ\Phi\in C^{1+\alpha}_0(\ov\0_\eta)$.
Hence, $h\in C^{1+\alpha}(\ov\0)$ is a weak solution of the height function formulation, cf. Definition \ref{D:1}.
We are now in the position to use the regularity result Theorem 5.1 in \cite{MM13x} which states  
that the distributional derivatives $\p_q^mh$ also belong to $C^{1+\alpha}(\ov\0)$ for all $m\geq1.$
Particularly, setting $m=1$, we find that $h_p$ is differentiable with respect to $q$ and $\p_q(h_p)=\p_p(h_q)\in C^\alpha(\ov\0).$ 
Exploiting   the fact that $h$ is a weak solution,  we see that the distributional derivatives 
\[
\p_q\left(\G+\frac{1+h_q^2}{2h_p^2}\right),\quad  \p_p\left(\G+\frac{1+h_q^2}{2h_p^2}\right)=\p_q\left( \frac{h_q}{h_p}\right)
\]
 belong both to $C^\alpha(\ov\0)\subset L_r(\0).$
Additionally,   $1+h_q^2\in C^{1+\alpha}(\ov\0)$ and regarding $\G$ as an element of $ W^1_r(\0)$, we obtain
 \[
 \frac{1 }{h_p^2}\in W^1_r(\0).
 \]
Because $h$ satisfies  \eqref{PBC} and recalling that $h_p$ is a bounded function,  \cite[Theorem 7.8]{GT01} implies that  $h_p\in W^1_r(\0).$
Hence,    $h\in C^{1+\alpha}(\ov\0)\cap W^2_r(\0)$ and  it is not difficult to see that $h$
satisfies the first equation of  \eqref{PB} in $L_r(\0)$, cf. \eqref{PF}.
This completes our arguments.
\end{proof}

We now state  our  main existence result.

\begin{thm}[Existence result]\label{T:MT} We fix $r\in(1,\infty)$ , $p_0\in(-\infty,0),$  
and define the H\"older exponent $\alpha:=(r-1)/r\in(0,1).$
 We also assume that the vorticity function $\gamma$ belongs to $L_r((p_0,0)).$ 
 
 Then,  there exists a {positive integer $N$} such that for each  {integer $n\geq N$}
 there exists a local real-analytic  curve $ {\cC_{n}}\subset C^{1+\alpha}(\ov\0)$   consisting only of strong  solutions of the problem
 \eqref{PB}-\eqref{PBC}. 
 Each solution $h\in {\cC_{n}}$, {$n\geq N,$}  satisfies additionally
 \begin{itemize}
 \item[$(i)$] $h\in W^2_r(\0)$,
 \item[$(ii)$] $h(\cdot,p)$ is a real-analytic map for all $p\in[p_0,0].$ 
 \end{itemize}
 Moreover, each curve ${\cC_{n}}$  contains a laminar flow solution
 and all the other points on the curve  describe waves  that have minimal period $2\pi/n $, only one crest and trough per period, and are symmetric with respect to the crest line.  
\end{thm}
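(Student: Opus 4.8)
The plan is to rephrase \eqref{PB}--\eqref{PBC} as an abstract bifurcation equation and to apply the Crandall--Rabinowitz theorem on bifurcation from a simple eigenvalue. I fix $p_0$ and $\gamma$ and let the total head $Q$ (equivalently the parameter $\lambda$ introduced below) be the bifurcation parameter. The first step is to exploit the capillarity to recast the problem in a setting adapted to the merely $L_r$-integrable vorticity: since one cannot work in the classical $C^{2+\alpha}$ scale, I would use the interior equation of \eqref{PB} to eliminate the second-order surface derivative $h_{qq}$ from the dynamic boundary condition, turning the latter into a nonlocal, zero-order equation for the surface trace, and define the weak solution accordingly (Definition \ref{D:1}). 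This yields a real-analytic operator $\cF\colon\R\times\cO\to\mathcal Y$, with $\cO$ an open subset of the space of functions in $C^{1+\alpha}(\ov\0)$ that are even and $2\pi/n$-periodic in $q$, whose zeros are exactly the weak solutions of \eqref{PB}--\eqref{PBC}; building the symmetry and the period $2\pi/n$ into the function spaces automatically produces symmetric waves of the prescribed period (the fundamental mode being $\cos(nq)$).

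Next I would locate the laminar branch. Seeking $q$-independent solutions $h=H(p)$ reduces the interior equation to $H''=\gamma (H')^3$, which integrates to
\[
H'(p)=\bigl(\lambda-2\G(p)\bigr)^{-1/2},\qquad H(p)=\int_{p_0}^{p}\bigl(\lambda-2\G(s)\bigr)^{-1/2}\,ds,
\]
with $\G$ as in \eqref{E:G}; the boundary condition on $p=0$ then fixes $Q=Q(\lambda)$, and the family is admissible, with \eqref{PBC} holding, precisely for $\lambda>2\max_{[p_0,0]}\G$. Linearising $\cF$ along this branch and separating the Fourier mode $w=w_n(p)\cos(nq)$, a direct computation gives the two-point boundary value problem
\[
w_n''-3\gamma(H')^2\,w_n'-n^2(H')^2\,w_n=0\ \text{in}\ (p_0,0),\qquad w_n(p_0)=0,\qquad w_n'(0)=(g+\sigma n^2)(H'(0))^3\,w_n(0).
\]
The kernel of the linearisation is nontrivial exactly when the dispersion relation $d_n(\lambda)=0$ holds, where $d_n$ measures the defect in the Robin condition at $p=0$ of the solution normalised by $w_n(p_0)=0$, $w_n'(p_0)=1$. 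The capillary term $\sigma n^2$ is what guarantees that, for every integer $n\ge N$ with $N$ large enough, $d_n$ has a root $\lambda_n$ in the admissible range and that the corresponding kernel is one-dimensional.

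It then remains to verify the hypotheses of the Crandall--Rabinowitz theorem at $(\lambda_n,H(\cdot\,;\lambda_n))$: that the Fréchet derivative $D_h\cF$ is Fredholm of index zero (which follows from the Fredholm property of the linearised interior Dirichlet problem combined with the zero-order structure of the reformulated boundary equation, so that at $\lambda_n$ the one-dimensional kernel is matched by a one-dimensional cokernel), and that the transversality condition holds, i.e.\ $\p_\lambda D_h\cF$ applied to the kernel generator does not lie in the range of $D_h\cF(\lambda_n,\cdot)$. The latter is equivalent to the non-degeneracy $d_n'(\lambda_n)\ne0$ of the dispersion relation, which I would check directly from the explicit dependence of $H$ on $\lambda$. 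The theorem then produces the local real-analytic curve $\cC_n$ through the laminar solution $H(\cdot\,;\lambda_n)$, along which $h=H(\cdot\,;\lambda_n)+s\,w_n(p)\cos(nq)+o(s)$, with the bifurcation point being the only laminar flow on the curve. The regularity claims $(i)$--$(ii)$ follow by the bootstrap already carried out in Theorem \ref{T:EQ}: the weak solutions on $\cC_n$ are in fact strong $W^2_r(\0)$ solutions, and by \cite[Theorem 5.1]{MM13x} (or \cite[Theorem 1.1]{EM13x}) each $h(\cdot,p)$ is real-analytic; the nodal properties (minimal period $2\pi/n$, a single crest and trough, symmetry about the crest) follow from the $\cos(nq)$ leading profile together with maximum-principle and ODE arguments showing $\p_q\tr_0 h$ vanishes only at the crest and trough.

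I expect the decisive difficulty to be the functional-analytic setup in the $L_r$ regime: ensuring that $\cF$ is well-defined and real-analytic and that $D_h\cF$ is Fredholm of index zero in spite of the unbounded vorticity. The zero-order reformulation via capillarity is precisely the device that circumvents the loss of regularity; everything downstream (the ODE analysis of $w_n$, the dispersion relation, and the transversality) involves only the well-behaved quantities $H'=(\lambda-2\G)^{-1/2}$ and $\G\in W^1_r((p_0,0))$, and is comparatively routine.
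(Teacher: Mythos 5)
Your strategy coincides with the paper's own proof in all structural respects: the same zero-order nonlocal recasting of the dynamic boundary condition (Definition \ref{D:1}), the same spaces of even, $2\pi/n$-periodic $C^{1+\alpha}$ functions, the same laminar branch $H(\cdot;\lambda)$ with $H_p=(\lambda-2\G)^{-1/2}$, the reduction of the linearization to the Sturm--Liouville problem \eqref{E:m}, the Crandall--Rabinowitz theorem, and the regularity bootstrap borrowed from the proof of Theorem \ref{T:EQ}. The genuine gap is in the one step you declare ``comparatively routine'': proving that the dispersion relation has a root $\lambda_n$ in the admissible range with a kernel that is exactly one-dimensional. You write that ``the capillary term $\sigma n^2$ is what guarantees'' this, but for unbounded $\gamma\in L_r$ this is precisely where every method in the literature fails: the Pontryagin-space approach of \cite{W06b, W06a} needs $r\geq 2$, and the direct estimates of \cite{CM13xx, MM13x} rely essentially on the boundedness of $\gamma$. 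The paper's substitute -- and its main technical novelty -- is the pair of Lemmas \ref{L:3} and \ref{L:4}: comparison estimates for the solution $v_1$ of \eqref{ERU} on intervals $[-\mu^{-\gamma},0]$ that shrink as $\mu\to\infty$, calibrated (via the choice $(2\beta-1)/(2\gamma-1)=4$) so that the hyperbolic-function bounds beat the very fast growth of $v_1$ in $\mu$; this yields $W(0;\lambda,\mu)\to-\infty$ as $\mu\to\infty$, which together with Lemma \ref{L:1} ($W(0;\lambda,0)>0$ for $\lambda>\lambda_0$) produces the root by an intermediate value argument. Nothing in your outline supplies an argument of this kind, and a naive estimate of $v_1$ over the whole interval $(p_0,0)$ does fail.

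A second, related omission concerns simplicity of the kernel and the definition of $N$. A kernel element of $\p_{\wt h}\cF(\lambda,0)$ has Fourier components $w_k(p)\cos(knq)$ for \emph{all} $k\in\N$, each solving \eqref{E:m} with $\mu=(kn)^2$; to get a one-dimensional kernel you must exclude nontrivial solutions for every $k\geq 2$, not merely exhibit one for $k=1$. The paper does this through Lemma \ref{L:5} (at any zero of $W(0;\cdot,\cdot)$ one has $W_\lambda>0$ and $W_\mu<0$, proved by integral identities for the variational derivatives $\p_\lambda v_1$, $\p_\mu v_1$, and using capillarity when $g=0$), which gives uniqueness of the root $\mu(\lambda)$ of $W(0;\lambda,\cdot)$ and the strict monotonicity and bijectivity of $\lambda\mapsto\mu(\lambda)$ onto $(\inf\mu,\infty)$ (Lemma \ref{L:6}); $\lambda_n$ is then \emph{defined} by $\mu(\lambda_n)=n^2$, and $N$ is the smallest integer with $N^2>\inf\mu$. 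This last point also explains why ``$N$ large enough'' cannot be dispensed with: for $r\in(1,3)$ the paper exhibits vorticity functions for which $\inf\mu>0$, so the restriction on the wavelength is real, and your transversality condition $d_n'(\lambda_n)\neq 0$ is not something one reads off ``from the explicit dependence of $H$ on $\lambda$'' -- it is the same nontrivial integral identity $W_\lambda>0$ of Lemma \ref{L:5}. In short: correct skeleton, but the analytic core of the theorem (Lemmas \ref{L:1}, \ref{L:3}--\ref{L:6}) is asserted rather than proved.
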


\begin{rem}\label{R:0} 
While proving Theorem \ref{T:MT} we make no restriction on the constant $g$, meaning that the result  is true for capillary-gravity waves but also in the context of capillary waves (when we set $g=0$).

Sufficient conditions which allow  us to choose {$N=1$} in Theorem \ref{T:MT} can be found in Lemma \ref{L:9}.

 Also, if $\gamma\in C((p_0,0)),$ the solutions found in Theorem \ref{T:MT} are classical as one can easily show that, additionally to the regularity properties stated in Theorem \ref{T:EQ}, 
 we also  have $h\in C^2(\0),$ $\psi\in C^2(\0_\eta)$, and  $(u,v,P)\in C^1(\0_\eta)$.
\end{rem}

\section{Weak solutions for the height function formulation}\label{Sec:3}
This last section is dedicated to proving Theorem \ref{T:MT}.
Therefore, we pick $r\in(1,\infty)$ and let $\alpha=(r-1)/r\in(0,1)$ be fixed in the remainder of this paper.
The   formulation \eqref{PB} is very useful when trying to determine classical solution of the water wave problem \cite{W06b, W06a}.
However, when the vorticity function belongs to $L_r((p_0,0)),$  $r\in(1,\infty),$ the curvature term and the lack of regularity of the vorticity function gives rise to several difficulties when trying to consider
the equations \eqref{PB} in a suitable (Sobolev) analytic setting.
For example, the trivial solutions of \eqref{PB}, see Lemma \ref{L:LFS} below, belong merely to $W^2_r(\0)\cap C^{1+\alpha}(\ov\0).$
When trying to prove the Fredholm property of the linear operator associated to the linearization of the problem around these trivial solutions, 
one has to deal   with an elliptic equation in divergence form and having coefficients merely in $W^1_r(\0)\cap C^{\alpha}(\ov\0),$ cf. \eqref{L1}.
The solvability of elliptic boundary value problems in $W^2_r(\0)$ requires in general though more regularity from  the coefficients.  
Also, the trace $\tr_0 h_{qq}$ which appears in the second equation of \eqref{PB} is meaningless for functions in  $W^2_r(\0).$

Nevertheless, using the fact that the operator $(1-\p_q^2):H^2(\s)\to L_2(\s)$ is an isomorphism and   the divergence structure of the first equation of \eqref{PB}, that is
\[\left(\frac{h_q}{h_p}\right)_q-\left(\Gamma+\frac{1+h_q^2}{2h_p^2}\right)_p=0\qquad\text{in $\0$,}\]
with $\G$ being defined by the relation \eqref{E:G}, one can introduce the following definition of a weak solution of \eqref{PB}. 
\begin{defn}\label{D:1} A function $h\in C^{1}(\ov\0)$ which satisfies \eqref{PBC} is called a {\em weak solution} of \eqref{PB} if we have
\begin{subequations}\label{WF}
\begin{align}
h+(1-\p_q^2)^{-1}\tr_0\left( \frac{\left(1+h_q^2+(2gh-Q)h_p^2\right)(1+h_q^2)^{3/2}}{2\sigma h_p^2}-h\right)=&0\qquad\text{on $p=0$;}\label{PB0}\\[1ex]
h=&0\qquad\text{on $p=p_0$;}\label{PB1}
\end{align}
and if $h$ satisfies the following integral equation
\begin{equation}\label{PB2}
 \int_\0\frac{h_q}{h_p}\phi_q-\left(\Gamma+\frac{1+h_q^2}{2h_p^2}\right)\phi_p\,  d(q,p)=0\qquad\text{for all $\phi\in C^1_0(\0)$.}
\end{equation}
\end{subequations}
\end{defn}

Clearly, any strong solution $h\in C^{1+\alpha}(\ov\0)\cap W^2_r(\0)$ with $\tr_0 h \in W^2_r(\s)$ is a weak solution of \eqref{PB}.
Furthermore,  because of \eqref{PB0}, any weak solution of \eqref{PB} has additional regularity on the boundary component $p=0,$ that is $\tr_0 h\in C^2(\s).$
The arguments used in the last part of the proof of Theorem  \ref{T:EQ} show in fact that any weak solution $h$ which belongs to $C^{1+\alpha}(\ov\0)$ is a strong solution of \eqref{PB} (as stated in Theorem \ref{T:EQ} $(i)$).

The formulation \eqref{WF} has the advantage that in can be recast as an operator equation in a functional setting that enables us to use bifurcation results to  prove existence of weak solutions. 
To present this setting, we introduce the following Banach spaces:
\begin{align*}
 X&\!:=\!\left\{\wt h\in C^{1+\alpha}_{2\pi/n}(\ov\0)\,:\, \text{$\wt h$ is even in $q$ and $\wt h\big|_{p=p_0}=0$}\right\},\\
 Y_1&\!:=\!\{f\in\mathcal{D}'(\0)\,:\, \text{$f=\p_q\phi_1+\p_p\phi_2$ for  $\phi_1,\phi_2\in C^\alpha_{2\pi/n}(\ov\0)$ with $\phi_1$ odd and $\phi_2$ even in $q$}\},\\
 Y_2&\!:=\!\{\varphi\in C^{1+\alpha}_{2\pi/n}(\s)\,:\, \text{$\varphi$ is even}\},
\end{align*}
the positive integer  $n\in\N$  being fixed  later on.
The subscript $2\pi/n$ is used to express $2\pi/n-$periodic in $q$.
We recall that  $Y_1$ is a Banach space with the norm
\[
\|f\|_{Y_1}:=\inf\{\|\phi_1\|_\alpha+\|\phi_2\|_\alpha\,:\, f=\p_q\phi_1+\p_p\phi_2\}.
\]
In the following lemma we determine all laminar flow solutions of \eqref{WF}.
They correspond to waves with a flat surface $\eta=0$ and having parallel streamlines.

\begin{lemma}[Laminar flow solutions]\label{L:LFS} Let $\Gamma_{M}:=\max_{[p_0,0]}\Gamma$.
For every $\lambda\in(2\G_M,\infty)$, the function $H(\cdot;\lambda)\in W^2_r((p_0,0))$
with
\[
H(p;\lambda):=\int_{p_0}^p \frac{1}{\sqrt{\lambda-2\Gamma(s)}}\, ds\qquad\text{for $p\in[p_0,0]$}
\]
is a weak solution of \eqref{WF} provided that
\[
Q=Q(\lambda):=\lambda+2g\int_{p_0}^0\frac{1}{\sqrt{\lambda-2\Gamma(p)}}\, dp.
\]
There are no other weak solutions of \eqref{WF} that are independent of $q$.
\end{lemma}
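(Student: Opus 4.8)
The plan is to substitute a $q$-independent candidate $h(q,p)=H(p)$ into each of the three relations of the weak formulation \eqref{WF} and read off exactly what they force. Since $H$ does not depend on $q$, we have $h_q=0$, and the operator $(1-\p_q^2)^{-1}$ acts as the identity on any quantity that is constant in $q$; this collapses both \eqref{PB0} and \eqref{PB2} considerably. The admissibility constraint \eqref{PBC} reduces to $H'>0$ on $[p_0,0]$.

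First I would treat the interior equation \eqref{PB2}. With $h_q=0$ the integrand becomes $-\bigl(\G+1/(2H'^2)\bigr)\phi_p$, so that $\int_\0 \bigl(\G(p)+1/(2H'(p)^2)\bigr)\phi_p\, d(q,p)=0$ for every $\phi\in C^1_0(\0)$. Integrating first in $q$ and letting $\phi$ range over products $\chi(q)\zeta(p)$ shows that the $p$-distributional derivative of the continuous function $p\mapsto \G(p)+1/(2H'(p)^2)$ vanishes, so this function equals a constant, which I write as $\lambda/2$. Because $1/(2H'^2)>0$ and $\G$ attains its maximum $\G_M$, the constant is forced to satisfy $\lambda/2>\G_M$, i.e. $\lambda\in(2\G_M,\infty)$; solving for $H'$ together with $H'>0$ gives $H'(p)=(\lambda-2\G(p))^{-1/2}$, and imposing the bottom condition \eqref{PB1}, namely $H(p_0)=0$, and integrating yields precisely the claimed formula for $H(\cdot;\lambda)$.

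Next I would verify the surface relation \eqref{PB0} evaluated at $p=0$. Using $h_q=0$, the identity action of $(1-\p_q^2)^{-1}$ on $q$-constant data, and $\G(0)=0$ so that $1/H'(0)^2=\lambda$, the two copies of $\tr_0 h=H(0)$ cancel and the bracket collapses to the algebraic relation $1+(2gH(0)-Q)H'(0)^2=0$. Solving for $Q$ and inserting $H(0)=\int_{p_0}^0(\lambda-2\G)^{-1/2}\,dp$ and $1/H'(0)^2=\lambda$ gives exactly $Q=Q(\lambda)$. For the regularity statement, since $\G\in W^1_r((p_0,0))\hookrightarrow C^\alpha([p_0,0])$ and $\lambda-2\G\geq\lambda-2\G_M>0$, the map $H'=(\lambda-2\G)^{-1/2}$ is bounded, continuous and $C^\alpha$, while differentiating $H'^2(\lambda-2\G)=1$ gives $H''=\gamma H'^3\in L_r$; hence $H(\cdot;\lambda)\in W^2_r((p_0,0))$ as asserted. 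The uniqueness claim then follows because every step above was forced: any $q$-independent weak solution must have $\G+1/(2H'^2)$ constant, and that constant, necessarily lying in $(2\G_M,\infty)$, together with \eqref{PB1} and \eqref{PB0} determines $H$ and $Q$ uniquely.

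The computations are elementary once the reduction is set up; the only point needing a little care is the distributional argument that \eqref{PB2} forces $\G+1/(2H'^2)$ to be constant in $p$, that is, justifying that as $\phi$ ranges over $C^1_0(\0)$ the functions $p\mapsto\int_\s\phi_p\,dq$ exhaust enough of $C^1_c((p_0,0))$ to conclude the vanishing of the weak derivative. This, and the check that the resulting $H$ genuinely lies in $W^2_r$ rather than in a smoother space (a consequence of $\gamma$ being only $L_r$-integrable entering through $H''=\gamma H'^3$), are the only steps that are not purely routine.
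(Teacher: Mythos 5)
Your proof is correct and follows essentially the same route as the paper's: reduce \eqref{PB2} to the statement that $\Gamma+1/(2H_p^2)$ has vanishing distributional $p$-derivative (hence equals a constant $\lambda/2$, necessarily with $\lambda>2\G_M$), recover $H$ from \eqref{PB1}, and use that $(1-\p_q^2)^{-1}$ acts as the identity on $q$-constant functions to collapse \eqref{PB0} into the algebraic relation $1+(2gH(0)-Q)H_p^2(0)=0$ determining $Q(\lambda)$. The only difference is that you spell out details the paper leaves implicit (the tensor-product test functions justifying the constancy, and the $W^2_r$ regularity via $H''=\gamma H_p^3\in L_r$), which is fine.
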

\begin{proof}
 It readily follows from  \eqref{PB2} that if $H$ is a weak solution of \eqref{WF} that is independent of the variable  $q$, then $2\Gamma+1/H_p^2=0$ in $\mathcal{D}'((p_0,0)).$  
 The expression for $H$ is obtained now by using the relation \eqref{PB1}.
 When verifying the  boundary condition \eqref{PB0},  the relation $(1-\p_q^2)^{-1}\xi=\xi$ for all $\xi\in\R$ yields that $Q$ has to be equal with $Q(\lambda).$ 
\end{proof}

Because  $H(\cdot;\lambda)\in W^2_r((p_0,0))$, we can interpret by means of Sobolev's embedding $H(\cdot;\lambda)$ as being an element of  $X.$
We now are in the position of reformulating  the problem \eqref{WF} as an abstract operator  equation. 
Therefore, we introduce the nonlinear and nonlocal  operator $\cF:=(\cF_1,\cF_2):(2\G_M,\infty)\times X\to Y:=Y_1\times Y_2$ by the relations
\begin{align*}
 \cF_1(\lambda,{\wt h})\!:=\!&\left(\frac{{\wt h}_q}{H_p+{\wt h}_p}\right)_q-\left(\Gamma+\frac{1+{\wt h}_q^2}{2(H_p+{\wt h}_p)^2}\right)_p,\\
 \cF_2(\lambda,{\wt h})\!:=\!&\tr_0{\wt h}+(1-\p_q^2)^{-1}\tr_0\!\left(\!\frac{\left(1+{\wt h}_q^2+(2g(H+{\wt h})-Q)
 (H_p+{\wt h}_p)^2\right)(1+{\wt h}_q^2)^{3/2}}{2\sigma(H_p+{\wt h}_p)^2}-{\wt h}\!\right)
\end{align*}
for $(\lambda,{\wt h})\in (2 \G_M,\infty)\times X,$
whereby $H=H(\cdot;\lambda)$ and $Q=Q(\lambda)$ are defined in Lemma \ref{L:LFS}.
The operator $\cF$ is well-defined and   it depends real-analytically on its arguments, that is 
\begin{align}\label{BP0}
 \cF\in C^\omega((2\Gamma_M,\infty)\times X, Y).
\end{align}
With this notation, determining the weak solutions of the problem \eqref{PB} reduces to determining the zeros $(\lambda,{\wt h})$ of the   equation
\begin{align}\label{BP}
 \cF(\lambda,{\wt h})=0\qquad\text{in $Y$ }
\end{align}
for which ${\wt h}+H(\cdot;\lambda)$ satisfies \eqref{PBC}.
From the definition of $\cF$ we know that the laminar flow solutions of \eqref{PB} correspond  to the trivial solutions of $\cF$
\begin{align}\label{BP1}
 \cF(\lambda,0)=0\qquad\text{for all $\lambda\in(2 \G_M,\infty).$}
\end{align}
Actually, if   $(\lambda,{\wt h})$ is a solution of \eqref{BP},  the function $h:={\wt h}+H(\cdot;\lambda)\in X$ 
is a weak solution of \eqref{PB} when $Q=Q(\lambda)$, provided that ${\wt h}$ is sufficiently small in  $C^1(\ov\0).$  
In order to use the theorem on bifurcation from simple eigenvalues due
to Crandall and Rabinowitz \cite{CR71} in the setting of \eqref{BP}, we need to determine special values of $\lambda$ for which the Fr\'echet derivative  
$\p_{\wt h}\cF(\lambda,0)\in\kL(X,Y)$,
defined by
\[
\p_{\wt h}\cF(\lambda,0)[w]:=\lim_{\e\to0}\frac{\cF(\lambda,\e w)-\cF(\lambda,0)}{\e}\qquad\text{for $w\in X$,}
\]
is a Fredholm operator of index zero with a one-dimensional kernel.
To this end, we compute that
$\p_{\wt h} \cF (\lambda,0)=:(L,T)$ with $L\in\kL(X,Y_1) $ and $T\in\kL(X,Y_2)$  being given by
\begin{equation}\label{L1}
\begin{aligned}
 Lw:=& \left(\frac{w_q}{H_p}\right)_q+\left(\frac{w_p}{H_p^3}\right)_p,\\
 Tw:=&\tr_0 w+(1-\p_q^2)^{-1} \tr_0 \left(\frac{gw-\lambda^{3/2}w_p}{\sigma}-w\right)
\end{aligned}\qquad\quad\text{for $w\in X,$}
\end{equation}
and with $H=H(\cdot;\lambda)$ as in Lemma \ref{L:LFS}.

 We now study the properties of the linear operator $\p_{\wt h} \cF (\lambda,0)$,  $\lambda>2\G_M.$ 
Recalling that $H\in C^{1+\alpha}([p_0,0]),$ we obtain together with \cite[Theorem 8.34]{GT01}  the following result.
\begin{lemma}\label{L:2}
   The Fr\' echet  derivative  $\p_{\wt h} \cF(\lambda,0)\in\kL(X,Y)$ is a Fredholm operator of index zero for each $\lambda\in(2\G_M,\infty).$ 
\end{lemma}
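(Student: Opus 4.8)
The plan is to exhibit $\p_{\wt h}\cF(\lambda,0)=(L,T)$ as a compact perturbation of the Dirichlet realization of the divergence-form operator $L$, for which the global Schauder theory applies. First I would record the structure of $L$. By Lemma \ref{L:LFS} we have $H_p=(\lambda-2\G(\cdot))^{-1/2}$, and since $\G\in C^\alpha([p_0,0])$ with $\lambda>2\G_M$, the coefficients $1/H_p$ and $1/H_p^3$ lie in $C^\alpha(\ov\0)$ and are bounded away from zero. Hence $L$ is uniformly elliptic in divergence form with $C^\alpha(\ov\0)$ coefficients, and one checks the parities match so that $Lw\in Y_1$ for $w\in X$ (indeed $w_q/H_p$ is odd and $w_p/H_p^3$ is even in $q$).

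Next I would isolate the principal part of the boundary operator $T$. Writing $\mathcal{R}:=(1-\p_q^2)^{-1}$, I would use that $\mathcal{R}$ maps $C^\alpha_{2\pi/n}(\s)$ continuously into $C^{2+\alpha}_{2\pi/n}(\s)$, so that, because $C^{2+\alpha}\hookrightarrow C^{1+\alpha}$ on $\s$ is compact, $\mathcal{R}$ is a compact operator from $C^\alpha_{2\pi/n}(\s)$ into $Y_2$. Since every $w\in X$ satisfies $\tr_0 w\in C^{1+\alpha}(\s)$ and $\tr_0 w_p\in C^\alpha(\s)$, the map $w\mapsto \tr_0\big((gw-\lambda^{3/2}w_p)/\sigma-w\big)$ is bounded from $X$ into $C^\alpha_{2\pi/n}(\s)$; composing with $\mathcal{R}$ therefore yields a compact operator $K\in\kL(X,Y_2)$. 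By \eqref{L1} this gives $Tw=\tr_0 w+Kw$, so that $(L,T)=(L,\tr_0)+(0,K)$ with $(0,K)\in\kL(X,Y)$ compact.

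Since the Fredholm property and the index are stable under compact perturbations, it then suffices to show that the Dirichlet realization $(L,\tr_0)\colon X\to Y_1\times Y_2$ is Fredholm of index zero. This map encodes exactly the boundary value problem $Lw=f$ in $\0$, $\tr_0 w=\varphi$ on $p=0$, and $w=0$ on $p=p_0$ (with $2\pi/n$-periodicity laterally), for data $(f,\varphi)\in Y_1\times Y_2$. As $L$ has no zeroth-order term, the maximum principle yields uniqueness, while the global $C^{1+\alpha}$ Schauder theory for divergence-form equations, \cite[Theorem 8.34]{GT01}, yields a $C^{1+\alpha}$ solution for every divergence-form right-hand side $f\in Y_1$ and every $\varphi\in Y_2$; the even-in-$q$ and periodicity constraints merely restrict this isomorphism to the corresponding closed subspaces. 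Thus $(L,\tr_0)$ is a Banach-space isomorphism, hence Fredholm of index zero, and restoring the compact term $(0,K)$ preserves this, giving the claim.

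I expect the main obstacle to be the clean application of the global Schauder theory in this mixed periodic setting: verifying that \cite[Theorem 8.34]{GT01} delivers genuine solvability (not merely an a priori estimate) with the divergence-form data space $Y_1$ and with the Dirichlet condition imposed on the two horizontal boundary components, and that all the operators respect the even-in-$q$ symmetry so that the isomorphism restricts without loss. By contrast, the compactness bookkeeping for $K$ is routine once the two-derivative smoothing of $(1-\p_q^2)^{-1}$ is exploited.
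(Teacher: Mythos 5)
Your proposal is correct and takes essentially the same route as the paper: the paper's proof is just the observation that $H\in C^{1+\alpha}([p_0,0])$ together with \cite[Theorem 8.34]{GT01} and a citation of Lemma 4.1 in \cite{MM13x}, whose argument is exactly your decomposition $\p_{\wt h}\cF(\lambda,0)=(L,\tr_0)+(0,K)$, with $K$ compact thanks to the two-derivative smoothing of $(1-\p_q^2)^{-1}$ followed by the compact embedding $C^{2+\alpha}(\s)\hookrightarrow C^{1+\alpha}(\s)$, and with $(L,\tr_0)$ an isomorphism by the $C^{1+\alpha}$ solvability theory for divergence-form Dirichlet problems (which \cite[Theorem 8.34]{GT01} indeed provides, not merely an a priori estimate). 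Your symmetry and periodicity bookkeeping matches what is implicitly used there, so nothing further is needed.
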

\begin{proof}
See the proof of Lemma 4.1 in \cite{MM13x}.
\end{proof}

In order to apply the previously mentioned bifurcation result,   we need   to determine special values for $\lambda$ such that the kernel of  $\p_{\wt h} \cF(\lambda,0)$ is a  subspace of $X$ of dimension one.
To this end, we observe that if $0\neq w\in X$ belongs to the kernel of $\p_{\wt h} \cF(\lambda,0)$, the relation $Lw=0$ in $Y_1$ implies that, for each $k\in\N,$ the Fourier coefficient
\[
w_k(p):=\la w(\cdot, p)|\cos(kn\cdot)\ra_{L_2}:=\int_0^{2\pi} w(q,p)\cos(knq)\, dq\qquad\text{for $p\in[p_0,0]$}
\]
belongs to $C^{1+\alpha}([p_0,0])$ and solves the equation
\begin{align}\label{EQ:M}
 \left(\frac{w_k'}{H_p^3}\right)'-\frac{(kn)^2w_k}{H_p}=0\qquad\text{in $\mathcal{D}'((p_0,0)).$}
\end{align}
Additionally,    multiplying the relation $Tw=0$ by $\cos(knq)$ we determine, in virtue  of the symmetry of the operator $(1-\p_q^2)^{-1},$
that is
\begin{align*}
\la f|(1-\p_q^2)^{-1}g\ra_{L_2}=\la  (1-\p_q^2)^{-1}f|g\ra_{L_2}\qquad\text{for all $f,g\in L_2(\s)$},
\end{align*}
 a further relation 
\[(g+\sigma (kn)^2)w_k(0)=\lambda^{3/2}w_k'(0).\]
Finally, because of $w\in X$, we get $w_k(p_0)=0$.
Since $W^1_r((p_0,0))$ is an algebra for any $r\in(1,\infty),$ cf. \cite{A75}, it is easy to see that   $w_k$ belongs to $ W^2_r((p_0,0))$ and that it solves the system
\begin{equation}\label{E:m}
\left\{
\begin{array}{rlll}
  (a^3(\lambda) w')'-\mu a(\lambda)w&=&0 &\text{in $L_r((p_0,0))$,}\\
  (g+\sigma\mu)w(0)&=&\lambda^{3/2}w'(0),\\
  w(p_0)&=&0,
  \end{array}\right.
\end{equation}
when $\mu=(kn)^2.$
For simplicity, we set  $a(\lambda):=a(\lambda;\cdot):=\sqrt{\lambda-2\Gamma}\in W^1_r((p_0,0)).$

Our task is to determine special values for $\lambda$ with the property that the system \eqref{E:m} has nontrivial solutions, 
which form a one-dimensional subspace of $W^2_r((p_0,0))$,    { only for $\mu=n^2.$}
Therefore, given $(\lambda,\mu)\in(2 \G_M,\infty)\times[0,\infty),$  we introduce  the Sturm-Liouville type operator 
$R_{\lambda,\mu}:W^2_{r,0} \to L_r((p_0,0))\times  \R$   by 
\begin{equation*}
 R_{\lambda,\mu}w:=
 \begin{pmatrix}
  (a^3(\lambda) w')'-\mu a(\lambda)w\\
  (g+\sigma\mu)w(0)-\lambda^{3/2}w'(0)
 \end{pmatrix}\qquad\text{for $w\in W^2_{r,0},$}
\end{equation*}
whereby  $W^2_{r,0}:=\{w\in W^2_r((p_0,0))\,:\, w(p_0)=0\}.$
Additionally, for   $(\lambda,\mu)$ as above, we let $v_i\in W^2_r((p_0,0))$, with
 $v_i:=v_i(\cdot;\lambda,\mu)$, denote   the unique solutions of  the initial value problems 
\begin{equation}\label{ERU}
\left\{\begin{array}{lll}
  (a^3(\lambda) v_1')'-\mu a(\lambda)v_1=0\qquad \text{in $L_r((p_0,0))$},\\[1ex]
  v_1(p_0)=0,\quad v_1'(p_0)=1,
 \end{array} \right. 
 \end{equation}
and
\begin{equation}\label{ERUa}
 \left\{\begin{array}{lll}
  (a^3(\lambda)v_2')'-\mu a(\lambda)v_2=0\qquad \text{in $L_r((p_0,0))$},\\[1ex]
  v_2(0)=\lambda^{3/2},\quad v_2'(0)=g+\sigma\mu.
 \end{array}
 \right.
\end{equation}
Similarly as in the bounded vorticity case $\gamma\in L_\infty((p_0,0)) $ considered in \cite{MM13x}, we have the following property.
\begin{prop}\label{P:2} Given  $(\lambda,\mu)\in(2\G_M,\infty)\times[0,\infty),$ $R_{\lambda,\mu}$ is a Fredholm operator of index zero and its kernel is at most one-dimensional.
 Furthermore, the kernel of  $R_{\lambda,\mu}$ is one-dimensional exactly when  the functions  $v_i$, $i=1,2,$ given by \eqref{ERU} and \eqref{ERUa}, are linearly dependent.
 In the latter case we have $\ke R_{\lambda,\mu}=\spa\{v_1\}.$
\end{prop}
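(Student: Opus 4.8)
The plan is to read $R_{\lambda,\mu}$ as a regular second–order two–point boundary value operator and to argue in two independent stages: first the Fredholm property of index zero via a compact perturbation argument, and then the precise description of the kernel through the two fundamental solutions $v_1,v_2$ from \eqref{ERU}--\eqref{ERUa}.

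For the Fredholm statement I would exhibit $R_{\lambda,\mu}$ as a compact perturbation of an isomorphism. Consider the model operator
\[
A_0 w:=\bigl((a^3(\lambda)w')',\,-\lambda^{3/2}w'(0)\bigr),\qquad w\in W^2_{r,0}.
\]
I would show $A_0$ is an isomorphism onto $L_r((p_0,0))\times\R$ by solving $A_0w=(f,c)$ explicitly: integrating $(a^3(\lambda)w')'=f$ once determines $a^3(\lambda)w'$ from the prescribed value $w'(0)=-c/\lambda^{3/2}$, and integrating once more, using $w(p_0)=0$, fixes $w$ uniquely. The crucial regularity point is that, since $\lambda>2\G_M$, the coefficient $a^3(\lambda)=(\lambda-2\G)^{3/2}$ is bounded away from zero and lies in $W^1_r((p_0,0))$; as $W^1_r$ is a Banach algebra, cf. \cite{A75}, also $1/a^3(\lambda)\in W^1_r$, whence $w'=(a^3(\lambda)w')/a^3(\lambda)\in W^1_r$ and therefore $w\in W^2_{r,0}$. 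Injectivity is immediate, so $A_0$ is an isomorphism by the open mapping theorem. Now $R_{\lambda,\mu}-A_0$ sends $w$ to $\bigl(-\mu a(\lambda)w,\,(g+\sigma\mu)w(0)\bigr)$; the first component is compact as a map $W^2_{r,0}\to L_r$, since $W^2_r\hookrightarrow L_r$ compactly and multiplication by $a(\lambda)\in C([p_0,0])$ is bounded, while the second is a bounded rank–one functional, hence compact. Stability of the Fredholm index under compact perturbations then yields that $R_{\lambda,\mu}$ is Fredholm of index zero.

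For the kernel I would observe that $w\in\ke R_{\lambda,\mu}$ precisely when $w$ solves the homogeneous equation in \eqref{E:m} together with the two boundary conditions $w(p_0)=0$ and $(g+\sigma\mu)w(0)=\lambda^{3/2}w'(0)$. The coefficients being continuous and $a^3(\lambda)$ bounded below, the equation has a two–dimensional solution space, and each scalar boundary condition cuts out a one–dimensional subspace. The solutions vanishing at $p_0$ form exactly $\spa\{v_1\}$: indeed $v_1'(p_0)=1$ forces $v_1\neq0$, and any solution $w$ with $w(p_0)=0$ satisfies $w=w'(p_0)v_1$. Likewise the solutions meeting the condition at $p=0$ form exactly $\spa\{v_2\}$, since by construction $(g+\sigma\mu)v_2(0)-\lambda^{3/2}v_2'(0)=0$ while $v_2(0)=\lambda^{3/2}\neq0$ gives $v_2\neq0$. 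Hence $\ke R_{\lambda,\mu}=\spa\{v_1\}\cap\spa\{v_2\}$, the intersection of two lines in the two–dimensional solution space. This intersection is trivial when $v_1,v_2$ are linearly independent and equals the common line when they are dependent; consequently the kernel is at most one–dimensional, it is one–dimensional exactly in the linearly dependent case, and in that case $\ke R_{\lambda,\mu}=\spa\{v_1\}$.

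The only genuinely delicate point is the isomorphism claim for $A_0$, namely verifying that the function produced by integrating twice belongs to $W^2_r$ and not merely to $W^1_r$. This is where the Banach algebra structure of $W^1_r((p_0,0))$ together with the strict positivity $\min a(\lambda)=\sqrt{\lambda-2\G_M}>0$ enter decisively, and it is precisely what allows the argument to run for a merely $L_r$–integrable vorticity (so that $\G\in W^1_r$ and $a(\lambda)\in W^1_r$) rather than in the bounded setting of \cite{MM13x}. Everything else is routine linear ODE bookkeeping, and since the structure parallels the bounded–vorticity case, I expect no further complications.
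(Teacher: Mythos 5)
Your proof is correct, but it differs from the paper's in both halves, and the differences are worth recording. For the Fredholm property, the paper keeps the zeroth--order term in the invertible part: it writes $R_{\lambda,\mu}=R_I+R_c$ with $R_Iw=((a^3(\lambda)w')'-\mu a(\lambda)w,\,-\lambda^{3/2}w'(0))$ and $R_cw=(0,(g+\sigma\mu)w(0))$, and proves that $R_I$ is an isomorphism via a variational argument: a weak formulation on $H_*=\{w\in W^1_2((p_0,0)):w(p_0)=0\}$, coercivity (which is where $\mu\geq 0$ is used), Lax--Milgram, and then a regularity upgrade to $W^2_{r,0}$. You instead move $-\mu a(\lambda)w$ into the compact perturbation together with the trace term, and invert the bare operator $A_0w=((a^3(\lambda)w')',-\lambda^{3/2}w'(0))$ by explicit double integration, using that $a^3(\lambda;0)=\lambda^{3/2}$ pins down $a^3(\lambda)w'$ from the datum $w'(0)$, with the $W^2_r$-regularity of the result coming from $1/a^3(\lambda)\in W^1_r$ (algebra property plus $a(\lambda)\geq\sqrt{\lambda-2\G_M}>0$). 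Your route is more elementary --- no Lax--Milgram, no elliptic regularity --- at the modest price of having to check compactness of $w\mapsto\mu a(\lambda)w$ from $W^2_{r,0}$ to $L_r$, which is immediate from Rellich in one dimension; both routes give index zero by compact perturbation. For the kernel, the paper works with the Abel/Wronskian identity $a^3(\lambda)(w_1w_2'-w_2w_1')=\mathrm{const}$ (its relation \eqref{BV}) to show that two kernel elements are dependent and that any nonzero kernel element is collinear with both $v_1$ and $v_2$; you instead identify the kernel as $\spa\{v_1\}\cap\spa\{v_2\}$ inside the two-dimensional solution space, using that the evaluation map $w\mapsto(w(p),w'(p))$ is an isomorphism onto $\R^2$ at each endpoint. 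These two kernel arguments are essentially the same fact in different packaging, since the Wronskian identity is just the infinitesimal version of IVP uniqueness.

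One point you should make explicit: both your "two-dimensional solution space" claim and the uniqueness step $w=w'(p_0)v_1$ rest on existence and uniqueness for initial value problems whose coefficients are merely integrable (the equation $(a^3(\lambda)w')'=\mu a(\lambda)w$ holds only in $L_r$), so the classical Picard--Lindel\"of theorem does not apply directly; one rewrites the equation as a first-order system in the variables $(w,a^3(\lambda)w')$ and invokes Carath\'eodory theory for linear systems with $L_1$ coefficients. The paper relies on exactly the same fact when it declares $v_1,v_2$ to be \emph{the unique} solutions of \eqref{ERU}--\eqref{ERUa}, so this is a shared hypothesis rather than a gap, but in your write-up it is doing real work and deserves a sentence.
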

\begin{proof}
 First of all, $R_{\lambda,\mu}$ can be decomposed as the sum $R_{\lambda,\mu}=R_I+R_c$, whereby
  \[
 R_Iw:=
 \begin{pmatrix}
 (a^3(\lambda)w')'-\mu a(\lambda)w\\
  -\lambda^{3/2}w'(0)
 \end{pmatrix} \qquad \text{and}\qquad
R_cw:=
 \begin{pmatrix}
 0\\
  (g+\sigma\mu) w(0)
 \end{pmatrix} 
 \]
 for all $w\in W^2_{r,0}.$  
It is not difficult to see that $R_c$ is a compact operator.
Next, we show that   $R_I:W^2_{r,0} \to L_r((p_0,0))\times  \R$ is an isomorphism.
 Indeed, if $w\in W^2_{r,0}$ solves  the equation $R_Iw=(f,A), $ with $ (f,A)\in L_r((p_0,0))\times  \R$, then, since $W^2_r((p_0,0))\hookrightarrow C^{1+\alpha}([p_0,0]),$ we have
  \begin{equation}\label{VF}
  \int_{p_0}^0\left(a^3(\lambda)w'\varphi'+\mu a(\lambda)w\varphi\right)dp=-A\varphi(0)-\int_{p_0}^0 f\varphi\, dp
 \end{equation}
 for all $\varphi\in H_*:=\{w\in W^1_2((p_0,0))\,:\, w(p_0)=0\}$.
 The right-hand side of \eqref{VF} defines a linear functional in $\kL(H_*,\R)$ and that the left-hand side corresponds to a bounded bilinear and coercive functional in $H_*\times H_*.$
Therefore, the existence and uniqueness of a solution $w\in H_*$ of \eqref{VF} follows from the Lax-Milgram theorem, cf. \cite[Theorem 5.8]{GT01}.
In fact, one can easily see that  $w_*\in W^2_{r,0}$, so that  $R_I$ is  indeed an isomorphism. 

That the  kernel of  $R_{\lambda,\mu}$ is at most one-dimensional can be seen from the observation that if  $w_1,w_2\in W^2_r((p_0,0))$ are  solutions of  $(a^3(\lambda) w')'-\mu a(\lambda)w=0$, then
\begin{equation}\label{BV}a^3(\lambda)(w_1w_2'-w_2w_1')=const. \qquad\text{in $[p_0,0]$}.\end{equation}
Particularly, if  $w_1, w_2\in W^2_{r,0},$  we obtain, in view of $a(\lambda)>0 $ in $[p_0,0],$ that $w_1$ and $w_2$ are linearly dependent.
To finish the proof, we notice that if the functions $v_1$ and $v_2$, given by \eqref{ERU} and \eqref{ERUa}, are linearly dependent, then they both belong to $\ke R_{\lambda,\mu}.$
Moreover, if  $0\neq v\in \ke R_{\lambda,\mu},$ the relation \eqref{BV} yields that $v$ is collinear with both $v_1 $ and $v_2$, argument which completes our proof.
 \end{proof}

In view of the Proposition \ref{P:2}, we are left  to determine   $(\lambda,\mu)\in(2\G_M,\infty)\times[0,\infty)$ for which the Wronskian 
\[
W(p;\lambda,\mu):=\left|
\begin{array}{lll}
 v_1&v_2\\
 v_1'&v_2'
\end{array}
\right|
\]
vanishes on the entire interval $[p_0,0].$ 
Recalling \eqref{BV}, we arrive at the problem of determining the zeros of the real-analytic (\eqref{ERU} and \eqref{ERUa} can be seen as initial value problems for first order  ordinary differential equations)   function
$W(0;\cdot,\cdot):(2\G_M,\infty)\times  [0,\infty)\to\R$ defined by 
\begin{equation}\label{DEFG}
W(0;\lambda,\mu):=\lambda^{3/2}v_1'(0;\lambda,\mu)-(g+\sigma\mu)v_1(0;\lambda,\mu).
\end{equation}
We emphasize that the methods used in \cite{CM13xx, MM13x, W06b, W06a} in order to study the solutions 
of $W(0;\cdot,\cdot)=0$ cannot be used for general $L_r-$integrable vorticity functions.
Indeed, the  approach {chosen in the context of classical $C^{2+\alpha}-$solutions} in \cite{W06b, W06a}
is based on regarding the Sturm-Liouville problem \eqref{E:m} as a non standard   eigenvalue problem (the boundary condition depends on the eigenvalue $\mu$).
For this, the  author of  \cite{W06b, W06a} introduces a Pontryagin space with a indefinite inner product and uses abstract results pertaining to this setting.
In our context such considerations are possible only when restricting  $r\geq 2.$
On the other hand, the methods used  in  \cite{CM13xx, MM13x} are based on direct estimates for the solution of \eqref{ERU}, but these estimates 
rely to a large extent on the boundedness of $\gamma.$      
Therefore,  we need to find a new approach when allowing for general  $L_r-$integrable vorticity functions.

{Our strategy is as follows: in a first step we find a constant $\lambda_0\geq 2\G_M $ such 
that the function $ W(p;\lambda,\cdot)$ changes sign on $(0,\infty)$ for all $\lambda>\lambda_0,$ 
cf. Lemmas \ref{L:1} and \ref{L:4}.  
For this, the estimates established in Lemma \ref{L:3} within the setting of ordinary differential equations are crucial.
In a second step, cf. Lemmas \ref{L:5} and \ref{L:6}, we prove that $ W(p;\lambda,\cdot)$ changes  sign exactly once  on $(0,\infty)$, the particular value where $ W(p;\lambda,\cdot)$
vanishes being called  $\mu(\lambda).$
The properties of the mapping $\lambda\mapsto \mu(\lambda)$  derived in Lemma \ref{L:6}   
are the core of the analysis of the kernel of $\p_{\wt h}\mathcal{F}(\lambda,0).$
}

As  a first result, we state the following lemma.
\begin{lemma}\label{L:1}
 There exists a unique minimal $\lambda_0\geq 2\G_M$ such that $W(0;\lambda,0)>0$ for all $\lambda>\lambda_0.$
\end{lemma}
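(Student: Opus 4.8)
The plan is to exploit the fact that for $\mu=0$ the ordinary differential equation in \eqref{ERU} integrates in closed form, which reduces the entire statement to the monotonicity of a single explicit integral in $\lambda$. First I would solve \eqref{ERU} with $\mu=0$: the equation collapses to $(a^3(\lambda)v_1')'=0$, so $a^3(\lambda)v_1'$ is constant, and the initial data $v_1(p_0)=0$, $v_1'(p_0)=1$ fix this constant to be $a^3(\lambda;p_0)=(\lambda-2\Gamma(p_0))^{3/2}$. Hence
\[
v_1'(p)=\frac{(\lambda-2\Gamma(p_0))^{3/2}}{(\lambda-2\Gamma(p))^{3/2}},\qquad v_1(p)=(\lambda-2\Gamma(p_0))^{3/2}\int_{p_0}^p\frac{ds}{(\lambda-2\Gamma(s))^{3/2}}.
\]

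Next I would substitute these into the definition \eqref{DEFG} of $W(0;\lambda,0)$, where the boundary coefficient $g+\sigma\mu$ reduces to $g$. Since $\Gamma(0)=0$ by \eqref{E:G}, we have $a(\lambda;0)=\sqrt{\lambda}$, so that $\lambda^{3/2}v_1'(0)=(\lambda-2\Gamma(p_0))^{3/2}$, and a short computation yields the factored form
\[
W(0;\lambda,0)=(\lambda-2\Gamma(p_0))^{3/2}\Big(1-g\int_{p_0}^0\frac{dp}{(\lambda-2\Gamma(p))^{3/2}}\Big).
\]
Because $\lambda>2\Gamma_M\geq 2\Gamma(p_0)$, the prefactor is strictly positive, so the sign of $W(0;\lambda,0)$ coincides with that of the bracketed quantity $\Theta(\lambda):=1-g\,I(\lambda)$, where $I(\lambda):=\int_{p_0}^0(\lambda-2\Gamma(p))^{-3/2}\,dp$.

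Then I would record the two properties of $\Theta$ that drive the conclusion. For $\lambda>2\Gamma_M$ the integrand is uniformly bounded, as $\lambda-2\Gamma\geq\lambda-2\Gamma_M>0$, so $I$ is continuous and strictly decreasing in $\lambda$ with $I(\lambda)\to 0$ as $\lambda\to\infty$; consequently $\Theta$ is continuous, nondecreasing, and satisfies $\Theta(\lambda)\to 1$ as $\lambda\to\infty$. The positivity set $\{\lambda>2\Gamma_M:\Theta(\lambda)>0\}$ is therefore nonempty and, by monotonicity, upward closed, so setting $\lambda_0:=\inf\{\lambda>2\Gamma_M:\Theta(\lambda)>0\}\in[2\Gamma_M,\infty)$ gives $\Theta(\lambda)>0$ for every $\lambda>\lambda_0$ together with the minimality of $\lambda_0$. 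This treats uniformly both the capillary case $g=0$, where $\Theta\equiv 1$ and $\lambda_0=2\Gamma_M$, and the capillary-gravity case $g>0$.

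There is no serious obstacle here; the only point requiring a little care is that I deliberately avoid deciding whether $\lim_{\lambda\downarrow 2\Gamma_M}I(\lambda)$ is finite or infinite, since this depends on the fine behavior of $\Gamma$ near its maximum and is irrelevant to the claim. The infimum definition of $\lambda_0$, combined with the monotonicity of $\Theta$ and its limit at infinity, is precisely what sidesteps this case distinction and delivers existence, positivity on $(\lambda_0,\infty)$, and uniqueness of the minimal $\lambda_0$ in one stroke.
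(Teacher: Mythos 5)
Your proof is correct and follows essentially the same route as the paper: the paper likewise uses the explicit $\mu=0$ representation $v_1(p)=a^3(\lambda;p_0)\int_{p_0}^p a^{-3}(\lambda;s)\,ds$ (a special case of its integral relation \eqref{v1}) to obtain the factorization $W(0;\lambda,0)=a^3(\lambda;p_0)\bigl(1-g\int_{p_0}^0 a^{-3}(\lambda;p)\,dp\bigr)$ and then concludes from the behavior as $\lambda\to\infty$. Your additional remark that the bracket is nondecreasing in $\lambda$ merely makes explicit the infimum argument the paper leaves implicit.
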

\begin{proof}
First, we note that given  $(\lambda,\mu)\in(2\G_M,\infty)\times[0,\infty)$, the function $v_1$  satisfies  the following integral relation
\begin{equation}\label{v1}
 v_1(p)=\int_{p_0}^p\frac{a^3(\lambda;p_0)}{a^3(\lambda;{s})}\, d{s}+\mu\int_{p_0}^p\frac{1}{a^3(\lambda;s)}\int_{p_0}^sa(\lambda;r)v_1(r)\, dr\, ds\qquad\text{for $p\in[p_0,0].$}
\end{equation}
Particularly, $v_1$ is a strictly increasing function on $[p_0,0]$. 
Furthermore, since $a(\lambda;0)=\lambda^{1/2},$ we get
\begin{align*}
 W(0;\lambda,0)=&a^3(\lambda;p_0)-g\int_{p_0}^0\frac{a^3(\lambda;p_0)}{a^3(\lambda;p)}\, dp=a^3(\lambda;p_0)\left(1-g\int_{p_0}^0\frac{1}{a^3(\lambda;p)}\, dp\right)\to_{\lambda\to\infty}\infty.
\end{align*}
This proves the claim.
\end{proof}

We note that if $g=0,$ then $\lambda_0=2\G_M.$
In the context of capillary-gravity water waves it is possible  to choose,  in the case of a  bounded vorticity function,  $\lambda_0>2\G_M$ as being the unique solution of the equation  $W(0;\lambda_0,0)=0$.
In contrast, for certain unbounded vorticity functions $\gamma\in L_r((p_0,0)),$ with $r\in(1,\infty),$ the latter equation has no zeros in $(2\G_M,\infty).$
Indeed, if we set $\gamma(p):=\delta(-p)^{-1/(kr)}$ for $p\in(p_0,0),$ where $\delta>0$ and $k,r\in(1,3) $ satisfy $kr<3,$ then $\gamma\in L_r((p_0,0))$ and, for sufficiently large $\delta $ (or small $p_0$), we have  
 \begin{align*}
  \inf_{\lambda>2\G_M} W(0;\lambda,0)>0.
 \end{align*}
This property leads to restrictions  on the wavelength of the water waves bifurcating from the laminar flow solutions found in Lemma \ref{L:LFS}, cf. Proposition \ref{P:3}.

The  estimates below will be used in Lemma \ref{L:4} to bound  the integral mean  and the first order moment  of the    solution $v_1$ of \eqref{ERU} on intervals  $[p_1(\mu),0]$ with $p_1(\mu)\nearrow0$ as $\mu\to\infty.$
\begin{lemma}\label{L:3}
 Let $p_1\in(p_0,0)$, $A, B\in(0,\infty),$ and $(\lambda,\mu)\in(2\G_M,\infty)\times[0,\infty)$ be fixed  and define the positive constants
 \begin{equation}\label{constants}
 \begin{aligned}
  &\un C:=\min_{p\in[p_1,0]}\frac{a^3(\lambda;p_1)}{a^3(\lambda;p)},\quad \ov C:=\max_{p\in[p_1,0]}\frac{a^3(\lambda;p_1)}{a^3(\lambda;p)},\\
  &\un D:=\min_{s,p\in[p_1,0]}\frac{a(\lambda;s)}{a^3(\lambda;p)},
  \quad \ov D:=\max_{s,p\in[p_1,0]}\frac{a(\lambda;s)}{a^3(\lambda;p)}.
 \end{aligned}
 \end{equation}
Then, if  $v\in W^2_r((p_1,0))$  is the solution of 
\begin{equation}\label{EEE}
\left\{\begin{array}{lll}
  (a^3(\lambda) v')'-\mu a(\lambda)v=0\qquad \text{in $L_r((p_1,0))$},\\[1ex]
  v(p_1)=A,\quad v'(p_1)=B,
 \end{array}
 \right.
\end{equation}
we have the following estimates
\begin{align}
 \int_{p_1}^0v(p)\, dp\!\leq& -\frac{A\mu^{-1/2}\sinh(p_1\sqrt{\ov D}\mu^{1/2})}{\sqrt{\ov D}}+\frac{B\ov C\mu^{-1}\left(\cosh(p_1\sqrt{\ov D}\mu^{1/2})-1\right)}{\ov D},\label{FE1}\\[1ex]
 \int_{p_1}^0(-p)v(p)\, dp\!\geq& \frac{A\mu^{-1 }\left(\cosh(p_1\sqrt{\un D}\mu^{1/2})-1\right)}{\un D}+\frac{B\un C\mu^{-1}\!\left(\sqrt{\un D}p_1 - \sinh(p_1\sqrt{\un D}\mu^{1/2}) \mu^{-1/2}\right)}{\un D^{3/2}}\label{FE2}.
\end{align}
\end{lemma}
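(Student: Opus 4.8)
The plan is to convert the second-order problem \eqref{EEE} into an equivalent Volterra integral equation and then to trap $v$ between the explicitly solvable constant-coefficient problems obtained by freezing the coefficient ratios at their extreme values. Integrating $(a^3(\lambda)v')'=\mu a(\lambda)v$ twice and using the initial data $v(p_1)=A$, $v'(p_1)=B$ yields
\begin{equation*}
 v(p)=A+B\int_{p_1}^p\frac{a^3(\lambda;p_1)}{a^3(\lambda;t)}\,dt+\mu\int_{p_1}^p\frac{1}{a^3(\lambda;t)}\int_{p_1}^t a(\lambda;s)v(s)\,ds\,dt
\end{equation*}
for $p\in[p_1,0]$. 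Since $A,B>0$ and $a(\lambda)>0$, a standard continuity argument applied to this relation shows that $v>0$ and $v'>0$ on $[p_1,0]$; in particular $v$ is nonnegative, which is what makes the subsequent comparison monotone.

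First I would prove \eqref{FE1}. Using $a^3(\lambda;p_1)/a^3(\lambda;t)\le\ov C$ and $a(\lambda;s)/a^3(\lambda;t)\le\ov D$ together with $v\ge0$, the integral identity gives the pointwise majorization $v(p)\le A+B\ov C(p-p_1)+\mu\ov D\int_{p_1}^p\int_{p_1}^t v(s)\,ds\,dt$. Let $w$ be the solution of the constant-coefficient problem $w''=\mu\ov D\,w$ with $w(p_1)=A$, $w'(p_1)=B\ov C$, equivalently the solution of the same integral identity with equality. Subtracting the two relations, the difference $e:=w-v$ satisfies $e\ge \mu\ov D\int_{p_1}^p\int_{p_1}^t e\,ds\,dt$, and iterating this Volterra inequality (the iterated positive kernel having vanishing spectral radius on the bounded interval $[p_1,0]$) forces $e\ge0$, that is $v\le w$ pointwise. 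Writing $k:=\sqrt{\ov D}\,\mu^{1/2}$ one has $w(p)=A\cosh(k(p-p_1))+(B\ov C/k)\sinh(k(p-p_1))$, and the elementary integrals $\int_{p_1}^0\cosh(k(p-p_1))\,dp=-\sinh(kp_1)/k$ and $\int_{p_1}^0\sinh(k(p-p_1))\,dp=(\cosh(kp_1)-1)/k$ give $\int_{p_1}^0 v\le\int_{p_1}^0 w$, the latter being exactly the right-hand side of \eqref{FE1}.

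The estimate \eqref{FE2} is obtained by the mirror-image argument with the lower constants. Now $a^3(\lambda;p_1)/a^3(\lambda;t)\ge\un C$ and $a(\lambda;s)/a^3(\lambda;t)\ge\un D$ yield the minorization $v(p)\ge A+B\un C(p-p_1)+\mu\un D\int_{p_1}^p\int_{p_1}^t v\,ds\,dt$, and the same Volterra comparison gives $v\ge\un w$, where $\un w''=\mu\un D\,\un w$, $\un w(p_1)=A$, $\un w'(p_1)=B\un C$. Since $-p\ge0$ on $[p_1,0]$, integrating against the weight $-p$ preserves the inequality, so $\int_{p_1}^0(-p)v\,dp\ge\int_{p_1}^0(-p)\un w\,dp$. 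With $k_*:=\sqrt{\un D}\,\mu^{1/2}$, integration by parts gives $\int_{p_1}^0(-p)\cosh(k_*(p-p_1))\,dp=(\cosh(k_*p_1)-1)/k_*^2$ and $\int_{p_1}^0(-p)\sinh(k_*(p-p_1))\,dp=p_1/k_*-\sinh(k_*p_1)/k_*^2$, which assemble into precisely the right-hand side of \eqref{FE2}. The degenerate case $\mu=0$ is recovered by letting $\mu\to0$, since both sides are real-analytic in $\mu$ through the power series of $\cosh$ and $\sinh$.

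The main obstacle is the comparison step: one must verify rigorously that the pointwise integral majorization and minorization propagate to the solutions $w$ and $\un w$ of the frozen-coefficient equations. I expect the cleanest route is the monotone iteration of the associated Volterra operator sketched above, which sidesteps any differential comparison theorem for the merely $W^1_r$-regular coefficient $a(\lambda)$; the remaining work consists only of the explicit hyperbolic integrals, which are routine.
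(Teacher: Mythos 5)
Your proposal is correct and follows essentially the same strategy as the paper: freeze the coefficient ratios at the extreme constants $\un C,\ov C,\un D,\ov D$, compare with the explicitly solvable constant-coefficient problems via a Volterra/Gronwall-type monotone iteration (which requires the positivity of $v$, which you rightly make explicit), and evaluate the resulting hyperbolic integrals. The only cosmetic difference is that you compare $v$ itself with solutions of the homogeneous frozen equations and then integrate, whereas the paper compares the antiderivatives $\int_{p_1}^{\cdot}v\,ds$ and $\int_{p_1}^{\cdot}\int_{p_1}^{r}v\,ds\,dr$ with solutions of the corresponding inhomogeneous equations; these comparison functions are exactly the antiderivatives of yours, so the two arguments are equivalent and yield identical bounds.
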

\begin{proof}
 It directly follows  from \eqref{EEE} that
 \begin{align}\label{Der}
  v'(p)=\frac{a^3(\lambda;p_1)}{a^3(\lambda;p)}B+\mu\int_{p_1}^p\frac{a (\lambda;s)}{a^3(\lambda;p)}v(s)\, ds\qquad\text{for all $p\in[p_1,0],$}
 \end{align}
and therefore 
\[
v'(p)\leq B\ov C+\mu \ov D\int_{p_1}^pv(s)\, ds \qquad\text{in $p\in[p_1,0],$}
\]
cf. \eqref{constants}.
Letting now $\ov u:[p_0,0]\to\R$ be the function defined by
\[
\ov u(p):=\int_{p_1}^pv(s)\, ds\qquad\text{for $p\in[p_1,0],$}
\]
we find that $\ov u\in W^3_r((p_0,0))$ solves the following problem
\begin{equation*}
 \ov  u''-\mu \ov D\ov  u\leq B\ov C\quad \text{in $(p_1,0)$},\qquad \ov u(p_1)=0,\, \, \ov u'(p_1)=A.
\end{equation*}
It is not difficult to see that $\ov  u\leq \ov z$ on $[p_1,0],$ where $\ov z$ denotes the  solution of the initial value problem 
\begin{equation*}
  \ov  z''-\mu \ov D \ov z= B\ov C\quad \text{in $(p_1,0)$},\qquad \ov  z(p_1)=0,\, \, \ov z'(p_1)=A.
\end{equation*}
The solution $\ov z$ of this  problem can be determined explicitly 
\begin{align*}
 \ov z(p)=\frac{A\sinh(\sqrt{\ov D}\mu^{1/2}(p-p_1))}{\sqrt{\ov D\mu}}+\frac{B\ov C\left(\cosh(\sqrt{\ov D}\mu^{1/2}(p-p_1))-1\right)}{\ov D\mu}, \qquad p\in[p_1,0], 
\end{align*}
which gives, in virtue of $\ov u(0)\leq \ov z(0),$ the first estimate \eqref{FE1}.

In order to prove the second estimate \eqref{FE2},  we first note that integration by parts leads us to
\begin{align*}
\int_{p_1}^0(-p)v(p)\,dp =\int_{p_1}^0\int_{p_1}^p v(s)\, ds\,dp \qquad\text{in $[p_1,0],$}
\end{align*}
so that it is natural to define the function $\un u:[p_0,0]\to\R$  by the relation
\[
\un u(p):=\int_{p_1}^p\int_{p_1}^rv(s)\, ds\, dr\qquad\text{for $p\in[p_1,0].$}
\]
Recalling \eqref{Der}, we find similarly as before that
\[
v'(p)\geq B\un C+\mu \un D\int_{p_1}^pv(s)\, ds \qquad\text{in $p\in[p_1,0],$}
\]
and integrating this inequality over $(p_1,p)$, with $p\in(p_1,0)$, we get
\begin{align*}
 v(p)\geq A+B\un C(p-p_1)+\mu \un D\int_{p_1}^p\int_{p_1}^rv(s)\, ds\, dr \qquad\text{in $p\in[p_1,0].$}
\end{align*}
Whence,  $\un u\in W^4_r((p_0,0))$ solves the  problem
\begin{equation*}
 \un  u''-\mu \un D\,\un  u\geq A+B\un C(p-p_1)\quad \text{in $(p_1,0)$},\qquad \un u(p_1)=0,\, \, \un u'(p_1)=0.
\end{equation*}
As the right-hand side of the  above inequality is positive, we find   that $\un  u\geq \un z$ on $[p_1,0],$ where $\un z$ stands now for the  solution of the   problem 
\begin{equation*}
  \un  z''-\mu \un D \, \un z=  A+B\un C(p-p_1)\quad \text{in $(p_1,0)$},\qquad \un  z(p_1)=0,\, \, \un z'(p_1)=0.
\end{equation*}
One can easily verify that $\un z$   has the following expression 
\begin{align*}
 \un z(p)=&\frac{A\left(\cosh(\sqrt{\un D}\mu^{1/2}(p-p_1))-1\right)}{ \un D\mu }\\
 &+\frac{B\un C\left( \un D^{-1/2}\sinh(\sqrt{\un D}\mu^{1/2}(p-p_1))\mu^{-1/2}-(p-p_1)\right)}{\un D\mu}
\end{align*}
for $ p\in[p_1,0] $, and, since  $\un u(0)\geq \un z(0),$ we obtain the desired estimate \eqref{FE2}.
\end{proof}

The estimates \eqref{FE1} and \eqref{FE2} are  the main tools when proving the following result.
\begin{lemma}\label{L:4}
 Given $\lambda> 2\G_M,$ we have that
 \begin{align}\label{ES}
  \lim_{\mu\to\infty} W(0;\lambda,\mu)=-\infty.
 \end{align}
\end{lemma}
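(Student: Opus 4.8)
The plan is to read off from \eqref{DEFG} that $W(0;\lambda,\mu)=\lambda^{3/2}v_1'(0;\lambda,\mu)-(g+\sigma\mu)v_1(0;\lambda,\mu)$ and to exploit the fact that, although both $v_1(0)$ and $v_1'(0)$ grow exponentially in $\sqrt\mu$, the subtracted term carries a factor $\mu$ whereas $\lambda^{3/2}v_1'(0)$ only carries a factor $\sqrt\mu$ relative to $v_1(0)$. First I would record, directly from the integral relation \eqref{v1}, that $v_1(\cdot;\lambda,\mu)$ is positive and strictly increasing and that $v_1(0;\lambda,\mu)\geq c_0(\lambda)>0$ uniformly in $\mu$ (discard the nonnegative $\mu$-term in \eqref{v1}). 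Since $g\geq0$, the statement \eqref{ES} then reduces to proving that $\lambda^{3/2}v_1'(0;\lambda,\mu)=o\big(\mu\, v_1(0;\lambda,\mu)\big)$ as $\mu\to\infty$: indeed, for large $\mu$ this yields $\lambda^{3/2}v_1'(0)\leq\frac\sigma2\mu v_1(0)$, hence $W(0;\lambda,\mu)\leq-\frac\sigma2\mu v_1(0)\leq-\frac\sigma2 c_0\mu\to-\infty$.

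To obtain the required bound I would first integrate the differential equation in \eqref{ERU} over an interval $[p_1,0]$, with $p_1=p_1(\mu)\nearrow0$ to be fixed, using $a(\lambda;0)=\lambda^{1/2}$, to get the identity $\lambda^{3/2}v_1'(0)=a^3(\lambda;p_1)v_1'(p_1)+\mu\int_{p_1}^0a(\lambda;p)v_1(p)\,dp$. The integral is then estimated from above by replacing $a$ with its maximum on $[p_1,0]$ and invoking \eqref{FE1} with $A=v_1(p_1)$ and $B=v_1'(p_1)$. For the denominator I would use that $v_1$ is increasing, so that $v_1(0)\,p_1^2/2\geq\int_{p_1}^0(-p)v_1(p)\,dp$, and bound the moment from below by \eqref{FE2}. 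In this way both $\lambda^{3/2}v_1'(0)$ and $v_1(0)$ are sandwiched by the explicit hyperbolic expressions of Lemma \ref{L:3}.

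The decisive point is the calibration of $p_1(\mu)$. As $p_1\to0$ the constants \eqref{constants} converge, $\ov C,\un C\to1$ and $\ov D,\un D\to1/\lambda$, and since $a=\sqrt{\lambda-2\Gamma}$ is Hölder continuous of exponent $\alpha$ (because $\Gamma\in C^\alpha([p_0,0])$), the oscillation obeys $\ov D-\un D\leq C|p_1|^\alpha$. I would therefore set $p_1(\mu)=-\mu^{-\beta}$ with $\beta\in\big(\max\{1/4,\ 1/(2(1+\alpha))\},\,1/2\big)$, an interval which is nonempty exactly because $\alpha>0$; then $\xi:=|p_1|\sqrt\mu=\mu^{1/2-\beta}\to\infty$, while $(\sqrt{\ov D}-\sqrt{\un D})\,\xi\leq C\mu^{1/2-\beta(1+\alpha)}\to0$, so that all the factors $e^{\sqrt{\ov D}\,\xi}$ and $e^{\sqrt{\un D}\,\xi}$ coming from $\sinh$ and $\cosh$ remain comparable. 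Feeding the large-$\mu$ asymptotics of $\sinh$ and $\cosh$ into the two estimates shows that the dominant terms are those proportional to $A=v_1(p_1)$ and that they produce $\lambda^{3/2}v_1'(0)/v_1(0)\leq C(\lambda)\,\xi^2\sqrt\mu=C(\lambda)\,\mu^{3/2-2\beta}$, the contributions proportional to $B=v_1'(p_1)$ and the boundary term $a^3(\lambda;p_1)v_1'(p_1)$ being of strictly lower order. Since $\beta>1/4$ gives $\mu^{3/2-2\beta}=o(\mu)$, the reduction is complete.

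I expect the main obstacle to be precisely this calibration: $p_1(\mu)$ must tend to $0$ slowly enough that $\xi\to\infty$ (so that the exponential growth of $v_1$ is captured) and that the prefactor $\mu^{3/2-2\beta}$ is eventually dominated by $\sigma\mu$ (forcing $\beta>1/4$), yet fast enough that the oscillation of $a$ over $[p_1,0]$, amplified by $\xi$, stays bounded (forcing $\beta>1/(2(1+\alpha))$ and using $\alpha>0$). This is exactly where the merely $L_r$-integrable vorticity, encoded in the Hölder regularity of $a$, enters, replacing the pointwise WKB-type control available in the bounded-vorticity treatments of \cite{CM13xx, MM13x, W06b, W06a}.
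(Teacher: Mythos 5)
Your proof is correct, and although it rests on the same two pillars as the paper's proof---the representation \eqref{v1}/\eqref{DEFG} and the comparison estimates of Lemma \ref{L:3}---the core mechanism is genuinely different. The paper decomposes $W(0;\lambda,\mu)=T_1+\mu T_2$, where $T_1\to-\infty$ trivially (using $\sigma>0$), and reduces $T_2\to-\infty$ to the mean-versus-moment statement $\int_{p_0}^0v_1\,dp-\mu^{\beta}\int_{p_0}^0(-p)v_1\,dp\to-\infty$ for a fixed $\beta\in(1/2,1)$; this is proved by splitting at $p_1=-\mu^{-\gamma}$ with $\gamma\in(1/2,\beta)$, so that $\xi:=|p_1|\mu^{1/2}=\mu^{1/2-\gamma}\to0$. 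There the divergence to $-\infty$ comes from the far region $[p_0,p_1]$, while on $[p_1,0]$ the bounds \eqref{FE1}--\eqref{FE2} are used in the small-argument (Taylor) regime of $\sinh$ and $\cosh$, where only the qualitative limits $\ov C,\un C\to1$ and $\ov D,\un D\to1/\lambda$ matter. You instead reduce \eqref{ES} to the WKB-type ratio bound $\lambda^{3/2}v_1'(0)=o(\mu\,v_1(0))$ and work on a single shrinking interval with the opposite calibration $\xi=\mu^{1/2-\beta}\to\infty$ ($\beta<1/2$), where the exponential growth of $v_1$ is exactly what is being captured; the price is that the upper bound (built from $\ov D$) and the lower bound (built from $\un D$) must remain comparable, i.e. $e^{(\sqrt{\ov D}-\sqrt{\un D})\,\xi}=O(1)$, and this is where the H\"older estimate $\ov D-\un D\le C|p_1|^{\alpha}$ and the constraint $\beta>1/(2(1+\alpha))$ enter---a quantitative use of $\alpha>0$ (equivalently $r>1$) that the paper's proof avoids entirely, since it needs only continuity of $a$ and no rate. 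In exchange, your route yields the explicit, nearly sharp growth estimate $v_1'(0)/v_1(0)\le C\mu^{3/2-2\beta}$ and dispenses with the two-region splitting.

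One small inaccuracy in your outline: the contributions proportional to $B=v_1'(p_1)$ are not of strictly lower order than those proportional to $A=v_1(p_1)$; since $B$ is of size $\mu^{1/2}A$, the $A$- and $B$-terms are in fact comparable in both your numerator and your denominator. This is harmless: each of the two component ratios is $O(\xi^2\mu^{1/2})=O(\mu^{3/2-2\beta})=o(\mu)$, so your conclusion survives, for instance via the elementary inequality $(x_1+x_2)/(y_1+y_2)\le\max\{x_1/y_1,\,x_2/y_2\}$ for positive numbers.
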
 
\begin{proof}
 Recalling the relations \eqref{DEFG} and \eqref{v1}, we write  $W(0;\lambda,\mu)=T_1+\mu T_2,$ 
 whereby we defined
 \begin{align*}
 T_1&:=a^3(\lambda;p_0)\left(1-(g+\sigma\mu)\int_{p_0}^0\frac{1}{a^3(\lambda;p)}\, dp\right),\\
 T_2&:=\int_{p_0}^0a(\lambda;p)v_1(p)\, dp-(g+\sigma\mu)\int_{p_0}^0\frac{1}{a^3(\lambda;s)}\int_{p_0}^sa(\lambda;r)v_1(r)\, dr\, ds.
\end{align*}
Because $a(\lambda)$ is a continuous and positive function that does on depend on $\mu$, it is easy to see that $T_1\to-\infty$ as $\mu\to\infty.$
In the remainder of this proof we show that 
\begin{equation}\label{QE1}
 \lim_{\mu\to\infty} T_2=-\infty.
\end{equation}
In fact,  since  $a(\lambda)$ is bounded from below and from above in $(0, \infty)$,  we see, by using  integration by parts, that \eqref{QE1} holds provided that  there exists a constant $\beta\in(0,1)$ such that
\begin{equation}\label{QE2}
\lim_{\mu\to\infty} \left( \int_{p_0}^0v_1(p)\, dp-\mu^{\beta}\int_{p_0}^0(-p)v_1(p)\, dp\right)=-\infty.
\end{equation}
We now fix  $\beta\in(1/2,1)$ and prove that \eqref{QE2} is satisfied if we make this choice for $\beta.$ 
Therefore, we first choose $\gamma\in(1/2,\beta)$ with
\begin{align}\label{ch}
 \frac{2\beta-1}{2\gamma-1}=4.
\end{align}
Because for sufficiently large $\mu$ we have 
\begin{align*}
\int_{p_0}^{-\mu^{-\gamma}}v_1(p)\, dp-\mu^{\beta}\int_{p_0}^{-\mu^{-\gamma}}(-p)v_1(p)\, dp\leq&\int_{p_0}^{-\mu^{-\gamma}}v_1(p)\, dp-\mu^{\beta}\int_{p_0}^{-\mu^{-\gamma}}\mu^{-\gamma}v_1(p)\, dp\\[1ex]
=&(1-\mu^{\beta-\gamma})\int_{p_0}^{-\mu^{-\gamma}}v_1(p)\, dp\to_{\mu\to\infty}-\infty,
\end{align*}
we are left to show that
\begin{align}\label{QE3}
\limsup_{\mu\to\infty}\left(\int_ {-\mu^{-\gamma}}^0v_1(p)\, dp-\mu^{\beta}\int_{-\mu^{-\gamma}}^0(-p)v_1(p)\, dp\right)<\infty.
\end{align}
The difficulty of showing \eqref{QE2} is mainly caused by the fact that the function $v_1$ grows very fast with $\mu.$
However, because the volume of the interval of integration in \eqref{QE3} decreases also very fast when $\mu\to\infty$, the estimates derived in Lemma \ref{L:3} are accurate enough to establish \eqref{QE3}.
To be precise, for all $\mu>(-1/p_0)^{1/\gamma}$, we set $p_1:=-\mu^{-\gamma}$, $A:=v_1(p_1),$ $B:=v_1'(p_1)$, and obtain that the solution $v_1$ of \eqref{EEE} satisfies
\begin{align}\label{QE4}
\int_ {-\mu^{-\gamma}}^0v_1(p)\, dp-\mu^{\beta}\int_{-\mu^{-\gamma}}^0(-p)v_1(p)\, dp\leq \frac{A \sinh(\sqrt{\ov D}\mu^{1/2-\gamma})}{\un D\mu^{1/2}}E_1+\frac{B\un C}{\un D\mu}E_2,
\end{align}
whereby $A, B, \ov C,\un C,\ov D,\un D$ are functions of $\mu$ now, cf. \eqref{constants}, and 
\begin{align*}
 E_1&:=\frac{\un D}{\sqrt{\ov D}}- \mu^{\beta-1/2 }\frac{ \cosh( \sqrt{\un D}\mu^{1/2-\gamma})-1 }{ \sinh(\sqrt{\ov D}\mu^{1/2-\gamma})},\\[1ex]
 E_2&:= \frac{\ov C\un D}{\un C\ov D} \left(\cosh(\sqrt{\ov D}\mu^{1/2-\gamma})-1\right) -\mu^{\beta -\gamma}\left(\frac{\sinh(\sqrt{\un D}\mu^{1/2-\gamma})}{\sqrt{\un D}\mu^{1/2-\gamma}}-1\right).
\end{align*}
Recalling that  $\gamma>1/2$ and that   $A$,$B,$ $\ov C,\un C,\ov D,\un D$  are all   positive, it suffices to show that $E_1$ and $E_2$ are negative when $\mu$ is large.
In order to prove this property, we infer from \eqref{constants} that, as $\mu\to\infty,$ we have
\[
\ov D\to \lambda^{-1},\qquad \ov D\to \lambda^{-1},\qquad \ov C\to 1, \qquad \un C\to1.
\]
Moreover, using the substitution $t:=\sqrt{\un D}\mu^{1/2-\gamma} $ and l'Hospital's rule, we find 
\begin{align*}
 \lim_{\mu\to\infty}E_1=&1-\lim_{\mu\to\infty}\mu^{\beta-1/2 }\frac{ \cosh( \sqrt{\un D}\mu^{1/2-\gamma})-1 }{ \sinh(\sqrt{\un D}\mu^{1/2-\gamma})}\frac{\sinh(\sqrt{\un D}\mu^{1/2-\gamma})}{ \sinh(\sqrt{\ov D}\mu^{1/2-\gamma})}\\[1ex]
 =&1-\lim_{\mu\to\infty}\mu^{\beta-1/2 }\frac{ \cosh( \sqrt{\un D}\mu^{1/2-\gamma})-1 }{ \sinh(\sqrt{\un D}\mu^{1/2-\gamma})} =1-\frac{1}{\lambda^2} \lim_{t\searrow0}\frac{ \cosh( t)-1 }{ t^4\sinh(t)}=-\infty,
\end{align*}
cf. \eqref{ch}, and by similar  arguments
\begin{align*}
 \lim_{\mu\to\infty}E_2=&-\lim_{\mu\to\infty}\mu^{\beta -\gamma}\left(\frac{\sinh(\sqrt{\un D}\mu^{1/2-\gamma})}{\sqrt{\un D}\mu^{1/2-\gamma}}-1\right)=-\frac{1}{\lambda^{3/2}} \lim_{t\searrow0}\frac{ \sinh( t)-t }{ t^4}=-\infty.
\end{align*}
Hence, the right-hand side of \eqref{QE4} is negative when $\mu$ is sufficiently large, fact which proves the desired inequality  \eqref{QE3}.
\end{proof}

Combining the Lemmas \ref{L:1} and \ref{L:4}, we see that the equation $W(0;\cdot,\cdot)=0$ has at least a  solution for each $\lambda>\lambda_0.$
Concerning the sign of the first order derivatives  $W_\lambda(0;\cdot,\cdot)$ and $W_\mu(0;\cdot,\cdot)$
at the zeros    of $W(0;\cdot,\cdot)$, which will be used below to show that $W(0;\cdot,\cdot)$ has a unique zero  for each $\lambda>\lambda_0$, the
results established for a H\"older continuous   \cite{W06b, W06a} or for a bounded vorticity function \cite{CM13xx, MM13x} extend also to the case of a $L_r$-integrable vorticity function, without making any restriction on $r\in(1,\infty).$   

\begin{lemma}\label{L:5} Assume that $(\ov\lambda,\ov\mu)\in(\lambda_0,\infty)\times(0,\infty)$ satisfies $W(0; \ov\lambda,\ov\mu)=0.$
Then, we have
\begin{align}\label{slim}
 W_\lambda(0; \ov\lambda,\ov\mu)>0\qquad\text{and}\qquad W_\mu(0; \ov\lambda,\ov\mu)<0.
\end{align}
\end{lemma}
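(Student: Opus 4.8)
The plan is to read off the derivatives of $W(0;\cdot,\cdot)$ from \eqref{DEFG} by differentiating the initial value problem \eqref{ERU}. Since $v_1=v_1(\cdot;\lambda,\mu)$ depends real-analytically on its parameters (as already observed before Lemma \ref{L:1}), I may set $w:=\p_\mu v_1$ and $z:=\p_\lambda v_1$ and, using $\p_\lambda a=1/(2a)$, obtain that these solve the inhomogeneous problems $(a^3w')'-\mu a w=a v_1$ and $(a^3 z')'-\mu a z=-(\tfrac{3a}{2}v_1')'+\tfrac{\mu}{2a}v_1$, each with vanishing Cauchy data at $p_0$. Differentiating \eqref{DEFG} and recalling $a(\lambda;0)=\lambda^{1/2}$ then gives $W_\mu(0)=\lambda^{3/2}w'(0)-\sigma v_1(0)-(g+\sigma\mu)w(0)$ and $W_\lambda(0)=\tfrac32\lambda^{1/2}v_1'(0)+\lambda^{3/2}z'(0)-(g+\sigma\mu)z(0)$.

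The main device is the Lagrange identity: for $\star\in\{w,z\}$ one has $\tfrac{d}{dp}\big[a^3(v_1\star'-v_1'\star)\big]=v_1\cdot\big((a^3\star')'-\mu a\star\big)$, which I integrate over $(p_0,0)$ using $v_1(p_0)=0$, the vanishing data of $\star$ at $p_0$, and $a^3(\lambda;0)=\lambda^{3/2}$. Multiplying $W_\mu(0)$ and $W_\lambda(0)$ by $v_1(0)$ and inserting the hypothesis $W(0;\ov\lambda,\ov\mu)=0$, i.e. $\lambda^{3/2}v_1'(0)=(g+\sigma\mu)v_1(0)$, the boundary contributions collapse and I expect the clean identities
\[
v_1(0)\,W_\lambda(0)=\frac32\int_{p_0}^0 a\,(v_1')^2\,dp+\frac{\mu}{2}\int_{p_0}^0\frac{v_1^2}{a}\,dp,\qquad v_1(0)\,W_\mu(0)=\int_{p_0}^0 a\,v_1^2\,dp-\sigma v_1(0)^2.
\]
Since $v_1$ is strictly increasing on $[p_0,0]$ (cf. \eqref{v1}), we have $v_1>0$ on $(p_0,0]$ and in particular $v_1(0)>0$; the first identity is then a sum of strictly positive terms, which settles $W_\lambda(0)>0$ at once.

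The inequality $W_\mu(0)<0$ is the delicate point, because $\int_{p_0}^0 a\,v_1^2-\sigma v_1(0)^2$ carries no obvious sign. To fix it I trade the zeroth-order integral for a gradient integral via the energy identity: testing \eqref{ERU} with $v_1$, integrating by parts, and using the boundary relation gives $\mu\int_{p_0}^0 a\,v_1^2=(g+\sigma\mu)v_1(0)^2-\int_{p_0}^0 a^3(v_1')^2$, whence $\mu\,v_1(0)\,W_\mu(0)=g\,v_1(0)^2-\int_{p_0}^0 a^3(v_1')^2$. Now I exploit the standing assumption $\ov\lambda>\lambda_0$: by Lemma \ref{L:1} it forces $W(0;\ov\lambda,0)>0$, which is exactly $g\int_{p_0}^0 a^{-3}\,dp<1$. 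Combining this with Cauchy--Schwarz, $v_1(0)^2=\big(\int_{p_0}^0 v_1'\big)^2\le\int_{p_0}^0 a^3(v_1')^2\cdot\int_{p_0}^0 a^{-3}$ with strict inequality (equality would force $a^3v_1'$ constant, impossible since $(a^3v_1')'=\mu a v_1>0$), yields $\int_{p_0}^0 a^3(v_1')^2>v_1(0)^2/\!\int_{p_0}^0 a^{-3}>g\,v_1(0)^2$, so the right-hand side above is negative and $W_\mu(0)<0$. The genuine obstacle is precisely this last step, namely recognizing that the sign of $W_\mu$ is governed by the defining inequality $g\int_{p_0}^0 a^{-3}<1$ of the interval $(\lambda_0,\infty)$ and extracting it through the energy/Cauchy--Schwarz pair; everything else is bookkeeping with Wronskian identities and is insensitive to the value of $r\in(1,\infty)$.
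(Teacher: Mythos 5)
Your proposal is correct and follows essentially the same route as the paper: you differentiate the initial value problem \eqref{ERU} in the parameters, use the Lagrange/Wronskian identity together with the boundary relation $\ov\lambda^{3/2}v_1'(0)=(g+\sigma\ov\mu)v_1(0)$ to obtain exactly the paper's identities $v_1(0)W_\lambda(0)=\int_{p_0}^0\bigl(\tfrac{3a}{2}v_1'^2+\tfrac{\ov\mu}{2a}v_1^2\bigr)dp$ and $\ov\mu\,v_1(0)W_\mu(0)=g\,v_1^2(0)-\int_{p_0}^0 a^3v_1'^2\,dp$, and then close the second sign with the same Lemma \ref{L:1} bound $g\int_{p_0}^0 a^{-3}dp<1$ and strict Cauchy--Schwarz (the paper phrases the strictness as linear independence of $a^{3/2}v_1'$ and $a^{-3/2}$). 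No gaps; the argument matches the paper's proof step for step.
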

\begin{proof}
 The  Proposition \ref{P:2} and the discussion following it show that  
 $\ke R_{\ov\lambda,\ov\mu} =\spa\{v_1\}$,  whereby $v_1:=v_1(\cdot;\ov\lambda,\ov\mu)$. 
To prove the first claim, we note  that the algebra property of $W^1_r((p_0,0))$ yields that 
the partial derivative $v_{1,\lambda}:=\p_\lambda v_{1}(\cdot,\ov\lambda,\ov\mu)$ belongs to $W^2_r((p_0,0))$ and solves the problem
\begin{equation}\label{v1l}
 \left\{\begin{array}{lll}
 (a^{3}(\ov\lambda)v_{1,\lambda}')'-\ov\mu a(\ov\lambda) v_{1,\lambda}= 
 -(3a^2(\ov\lambda)a_{\lambda}(\ov\lambda)v_1')'+\ov\mu a_{\lambda}(\ov\lambda)v_1\qquad\text{in $ L_{r}((p_0,0))$,}\\[1ex]
  v_{1,\lambda}(p_0)= v_{1,\lambda}'(p_0)=0,
\end{array}\right.
\end{equation}
where $a_{\lambda}(\ov\lambda)=1/(2a(\ov\lambda))$.
Because  of the embedding $W^2_r((p_0,0))\hookrightarrow C^{1+\alpha}([p_0,0]),$ we find, by 
multiplying the differential equation satisfied by $v_1$, cf. \eqref{ERU}, with $v_{1,\lambda}$
and  the first equation of \eqref{v1l} with $v_1$, and after subtracting the resulting relations the first claim of \eqref{slim}
\begin{align*}
W_{\lambda}(0;\ov\lambda,\ov\mu)&=\ov\lambda^{3/2}v_{1,\lambda}'(0)+\frac{3}{2}\ov\lambda^{1/2}v_1'(0)-(g+\sigma\ov\mu)v_{1,\lambda}(0)\\
&=\frac{1}{v_1(0)}\left(\int_{p_0}^{0}\frac{3a(\ov\lambda)}{2} v_1'^{ 2}+\frac{\ov\mu}{2a(\ov\lambda)}v_1^2\, dp\right)>0.\end{align*}

For the second claim, we find as above that  $v_{1,\mu}:=\p_\mu v_{1}(\cdot,\ov\lambda,\ov\mu)\in W^2_r((p_0,0))$  
is the unique solution of the problem
\begin{equation}\label{v1mu}
 \left\{\begin{array}{lll}(a^{3}(\ov\lambda)v_{1,\mu}')'-\ov\mu a(\ov\lambda) v_{1,\mu}=a(\ov\lambda)v_1\qquad \text{in $ L_{r}((p_0,0))$},\\[1ex]
 v_{1,\mu}(p_0)=v_{1,\mu}'(p_0)=0.
 \end{array}\right.
\end{equation}
Also, if we   multiply the differential equation satisfied by $v_1$  with $v_{1,\mu}$ 
and  the first equation of \eqref{v1mu} with $v_1$, we get after building the difference of these relations 
\begin{align*}
 \int_{p_0}^0\!a(\ov\lambda)v_1^2\, dp=\ov\lambda^{3/2}\!v_{1,\mu}'(0)v_1(0)-\ov\lambda^{3/2}\!v_1'(0)v_{1,\mu}(0)=v_1(0)\left(\!\ov\lambda^{3/2}v_{1,\mu}'(0)-(g+\sigma\ov \mu)v_{1,\mu}(0)\!\right)\!,
\end{align*}
the last equality being a consequence of the fact that $v_1$ and  $v_2:=v_2(\cdot;\ov\lambda,\ov\mu) $ are collinear  for this choice of the parameters.
Therefore, we have
\begin{align}\label{qqq}
 W_\mu(0;\ov\lambda,\ov\mu)=\ov\lambda^{3/2}v_{1,\mu}'(0)-\sigma v_1(0)-(g+\sigma\ov\mu)v_{1,\mu}(0)=\frac{1}{v_1(0)}  \left(\int_{p_0}^0a(\ov\lambda)v_1^2\, dp-\sigma v_1^2(0)\right).
\end{align}
In order to determine the sign of the latter expression, we multiply the first equation of \eqref{ERU} by $v_1$
and get, by using once more the collinearity of $v_1$ and $v_2,$ that
\[
\int_{p_0}^0a(\ov\lambda)v_1^2\, dp-\sigma v_1^2(0)=\frac{1}{\ov\mu}\left(gv_1^2(0)-\int_{p_0}^0a^3(\ov\lambda)v_1'^2\, dp\right).
\]
If $g=0$, the latter expression is negative and we are done. 
On the other hand, if we consider gravity effects, because of $\ov\mu>0,$ it is easy to see that  $a^{3/2}(\ov\lambda)v_1'$ and $a^{-3/2}(\ov\lambda)$ 
are linearly independent functions, fact which ensures together with Lemma \ref{L:1} and with H\"older's inequality  that
\begin{align*}
gv_1^2(0)&=g\left(\int_{p_0}^0a^{3/2}(\ov\lambda)v_1'\frac{1}{a^{3/2}(\ov\lambda)}\, dp\right)^2\\
&<g\left(\int_{p_0}^0a^{3 }(\ov\lambda)v_1'^2 \, dp\right)\left(\int_{p_0}^0 \frac{1}{a^{3 }(\ov\lambda)}\, dp\right)\leq \int_{p_0}^0a^{3 }(\ov\lambda)v_1'^2 \, dp,
\end{align*}
and the desired claim follows from \eqref{qqq}.
\end{proof}

We conclude with the following result.
\begin{lemma}\label{L:6}
 Given $\lambda>\lambda_0,$ there exists a unique zero $\mu=\mu(\lambda)\in (0,\infty)$ of the equation  $W(0;\lambda,\mu(\lambda))=0.$
 The function 
 \[\mu:(\lambda_0,\infty)\to(\inf_{(\lambda_0,\infty)}\mu(\lambda),\infty),\qquad \lambda\mapsto\mu(\lambda)\] is strictly increasing, real-analytic, and bijective. 
\end{lemma}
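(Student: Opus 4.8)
The plan is to combine the sign information on the boundary-value function $W(0;\lambda,\cdot)$ established in Lemmas \ref{L:1} and \ref{L:4} with the strict-monotonicity-at-zeros information from Lemma \ref{L:5}, and then to upgrade this to global existence, uniqueness, and smoothness of the implicit function $\mu(\lambda)$. First I would fix $\lambda>\lambda_0$ and recall from Lemma \ref{L:1} that $W(0;\lambda,0)>0$, while Lemma \ref{L:4} gives $\lim_{\mu\to\infty}W(0;\lambda,\mu)=-\infty$. Since $\mu\mapsto W(0;\lambda,\mu)$ is continuous (indeed real-analytic, as $W(0;\cdot,\cdot)$ is real-analytic by the remark following \eqref{DEFG}), the intermediate value theorem yields at least one zero $\mu\in(0,\infty)$.

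For uniqueness, the key observation is that at any such zero $\bar\mu$ we necessarily have $\lambda\in(\lambda_0,\infty)$ and $\bar\mu\in(0,\infty)$, so Lemma \ref{L:5} applies and gives $W_\mu(0;\lambda,\bar\mu)<0$. Thus \emph{every} zero of $\mu\mapsto W(0;\lambda,\mu)$ in $(0,\infty)$ is a point of strictly negative slope. But a continuously differentiable function of one variable cannot have two consecutive zeros at both of which the derivative is strictly negative (between two zeros the function must change its direction of crossing, forcing a zero with nonnegative slope in between, contradiction). Hence the zero is unique; call it $\mu(\lambda)$. This defines the function $\mu:(\lambda_0,\infty)\to(0,\infty)$.

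Next I would establish the regularity and monotonicity of $\lambda\mapsto\mu(\lambda)$ via the implicit function theorem. At the point $(\lambda,\mu(\lambda))$ we have $W(0;\lambda,\mu(\lambda))=0$ and, by Lemma \ref{L:5}, $W_\mu(0;\lambda,\mu(\lambda))\neq0$. Since $W(0;\cdot,\cdot)$ is real-analytic, the analytic implicit function theorem shows that $\mu$ is real-analytic on $(\lambda_0,\infty)$, with
\[
\mu'(\lambda)=-\frac{W_\lambda(0;\lambda,\mu(\lambda))}{W_\mu(0;\lambda,\mu(\lambda))}.
\]
Inserting the two sign statements from Lemma \ref{L:5}, namely $W_\lambda>0$ and $W_\mu<0$ at the zero, yields $\mu'(\lambda)>0$, so $\mu$ is strictly increasing.

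Finally, for bijectivity onto $(\inf_{(\lambda_0,\infty)}\mu(\lambda),\infty)$, strict monotonicity and continuity already give injectivity and identify the image as the open interval between $\inf_{(\lambda_0,\infty)}\mu$ and $\sup_{(\lambda_0,\infty)}\mu$; it remains to show $\mu(\lambda)\to\infty$ as $\lambda\to\infty$. For this I would argue that if $\mu(\lambda)$ stayed bounded along some sequence $\lambda\to\infty$, then passing to a convergent subsequence $\mu(\lambda)\to\mu_\infty<\infty$ and using continuity of $W(0;\cdot,\cdot)$ together with the blow-up computation $W(0;\lambda,\mu)=T_1+\mu T_2$ from the proof of Lemma \ref{L:4}---where $T_1\to-\infty$ for fixed $\mu$ while the relevant quantities depend continuously on $\lambda$---would force $W(0;\lambda,\mu(\lambda))$ to tend to $-\infty$ rather than remain zero, a contradiction. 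I expect this last surjectivity step to be the main obstacle, since it requires controlling the joint $(\lambda,\mu)$ behaviour of $W$ rather than the one-variable limits already recorded; the cleanest route is likely a direct estimate showing $W(0;\lambda,\mu)<0$ whenever $\mu$ is bounded and $\lambda$ is large, which combined with $W(0;\lambda,0)>0$ pins the unique zero $\mu(\lambda)$ below any prescribed bound only for small $\lambda$, hence forces $\mu(\lambda)\to\infty$.
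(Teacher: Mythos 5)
Your first three steps coincide with the paper's argument and are correct: existence via the intermediate value theorem from Lemmas \ref{L:1} and \ref{L:4}; uniqueness because every zero of $W(0;\lambda,\cdot)$ in $(0,\infty)$ has strictly negative slope by Lemma \ref{L:5}; real-analyticity and strict monotonicity via the (analytic) implicit function theorem and the signs in \eqref{slim}. The genuine gap is in the surjectivity step, and it is a sign error, not a technicality. For $\mu$ bounded and $\lambda\to\infty$ one has $W(0;\lambda,\mu)\to+\infty$, \emph{not} $-\infty$: in the decomposition $W=T_1+\mu T_2$ of Lemma \ref{L:4}, the limit $T_1\to-\infty$ is taken as $\mu\to\infty$ with $\lambda$ \emph{fixed}; when instead $\mu$ stays bounded and $\lambda\to\infty$, one has $a^3(\lambda;p_0)\to\infty$ while $\int_{p_0}^0 a^{-3}(\lambda;p)\,dp\to0$, so $T_1\to+\infty$ (and $T_2>0$). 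Worse, your proposed ``cleanest route'' --- a direct estimate showing $W(0;\lambda,\mu)<0$ whenever $\mu$ is bounded and $\lambda$ is large --- is not only false but would prove the \emph{opposite} of surjectivity: combined with $W(0;\lambda,0)>0$ and the intermediate value theorem, it would place the unique zero $\mu(\lambda)$ inside the bounded $\mu$-interval for all large $\lambda$, i.e. $\mu(\lambda)$ would stay bounded. Since $W(0;\lambda,\cdot)$ is positive before its unique zero and negative after it, $\mu(\lambda)>M$ is equivalent to $W(0;\lambda,M)>0$; what must be shown is therefore \emph{positivity}, not negativity, of $W$ in the regime ``$\mu$ bounded, $\lambda$ large''.

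The paper closes this step as follows. Assume $\lambda_n\to\infty$ with $(\mu(\lambda_n))_n$ bounded, and set $v_{1n}:=v_1(\cdot;\lambda_n,\mu(\lambda_n))$. Because the kernels $a^3(\lambda_n;p_0)/a^3(\lambda_n;s)$ and $\mu(\lambda_n)\,a(\lambda_n;r)/a^3(\lambda_n;s)$ appearing in \eqref{v1} are bounded uniformly in $n$, the integral equation \eqref{v1} gives $v_{1n}(p)\leq C\bigl(1+\int_{p_0}^p v_{1n}(s)\,ds\bigr)$ with $C$ independent of $n$, and Gronwall's inequality bounds $(v_{1n})_n$ in $C([p_0,0])$. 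Then, since differentiating \eqref{v1} (cf. \eqref{Der}) yields $\lambda_n^{3/2}v_{1n}'(0)\geq a^3(\lambda_n;p_0)$, one gets $0=W(0;\lambda_n,\mu(\lambda_n))\geq a^3(\lambda_n;p_0)-(g+\sigma\mu(\lambda_n))v_{1n}(0)\to+\infty$, a contradiction. So your overall structure (contradiction along a sequence with bounded $\mu$) is the right one, but the mechanism you propose would fail; the missing ingredients are the uniform Gronwall bound on $v_1$ and the lower bound $\lambda^{3/2}v_1'(0)\geq a^3(\lambda;p_0)$, which force the blow-up to $+\infty$.
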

\begin{proof}
Given $\lambda>\lambda_0,$ it follows from the Lemmas \ref{L:1} and \ref{L:4} that there exists a constant $\mu(\lambda)>0$ such that $W(0;\lambda,\mu(\lambda))=0.$
The uniqueness of this constant, and  the real-analyticity and the monotonicity of $\lambda\mapsto\mu(\lambda)$ follow readily from Lemma \ref{L:5} and the implicit function theorem. 
To complete the proof, let us assume that we found a sequence  $\lambda_n\to\infty$ such that $(\mu(\lambda_n))_n$ is bounded. 
Denoting by $v_{1n}$ the (strictly increasing) solution of \eqref{ERU} when $(\lambda,\mu)=(\lambda_n,\mu(\lambda_n)),$ we infer from \eqref{v1} that there exists a constant $C>0$ such that
\[
v_{1n}(p)\leq C\left(1+\int_{p_0}^pv_{1n}(s)\, ds\right)\qquad\text{for all $n\geq 1$ and $p\in[p_0,0].$}
\]
Gronwall's inequality yields  that the sequence $(v_{1n})_n$ is bounded in $C([p_0,0])$ and, together with \eqref{v1}, we find that
\[0=W(0;\lambda_n,\mu(\lambda_n))\geq a^3(\lambda_n;p_0)-(g+\sigma\mu(\lambda_n))v_{1n}(0)\underset{n\to\infty}\to\infty.\]
This is a contradiction, and the proof is complete.
\end{proof}

We choose now the integer $N$ from Theorem \ref{T:MT},   to be {the smallest positive integer which}  satisfies 
\begin{equation}\label{eq:rest}
{ N^2}>\inf_{(\lambda_0,\infty)}\mu(\lambda).
\end{equation}
Invoking Lemma \ref{L:6}, we  {find   a sequence $(\lambda_n)_{n\geq N}\subset (\lambda_0,\infty)$ having the properties that $\lambda_n\nearrow\infty$ and  
\begin{equation}\label{eq:sec}
 \text{$\mu(\lambda_n)=n^2$ \qquad for all $n\geq N.$}
\end{equation}}

We conclude the previous analysis with the following result.
\begin{prop}\label{P:3}
 Let {$N\in\N$ be  defined by \eqref{eq:rest}. 
 Then, for each $n\geq N $, the Fr\'echet derivative  
 $\p_{\wt h} \cF(\lambda_n,0)\in\kL(X,Y)$, with $\lambda_n$ defined by \eqref{eq:sec}, 
 is a Fredholm operator of index zero with a   one-dimensional kernel $\ke\p_{\wt h} \cF(\lambda_n,0)=\spa\{w_n\}$,
 whereby $w_n\in X$ is the function 
 $w_n(q,p):=v_1(p;\lambda_n,n^2)\cos(nq)$   for all $(q,p)\in\ov\0.$}
\end{prop}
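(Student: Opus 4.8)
The plan is to assemble the desired conclusion directly from the structural results already in place. By Lemma \ref{L:2}, the operator $\p_{\wt h}\cF(\lambda_n,0)=(L,T)\in\kL(X,Y)$ is Fredholm of index zero for every $\lambda_n>2\G_M$; since each $\lambda_n>\lambda_0\geq 2\G_M$ by construction in \eqref{eq:sec}, this Fredholm property is immediate. It therefore suffices to identify the kernel and show it is exactly one-dimensional and spanned by $w_n$. The key reduction, already carried out in the text preceding Proposition \ref{P:2}, is that any $0\neq w\in\ke\p_{\wt h}\cF(\lambda_n,0)$ has Fourier coefficients $w_k(p):=\la w(\cdot,p)|\cos(kn\cdot)\ra_{L_2}$ satisfying the Sturm--Liouville system \eqref{E:m} with $\mu=(kn)^2$ for each $k\in\N$; that is, $w_k\in\ke R_{\lambda_n,(kn)^2}$.

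First I would invoke Proposition \ref{P:2}: for each $k$, the kernel of $R_{\lambda_n,(kn)^2}$ is at most one-dimensional, and it is one-dimensional exactly when $W(0;\lambda_n,(kn)^2)=0$, in which case it equals $\spa\{v_1(\cdot;\lambda_n,(kn)^2)\}$. Combining this with Lemma \ref{L:6}, the equation $W(0;\lambda_n,\mu)=0$ has, for $\lambda_n>\lambda_0$, the unique positive root $\mu=\mu(\lambda_n)$. By the defining property \eqref{eq:sec}, $\mu(\lambda_n)=n^2$, so $W(0;\lambda_n,(kn)^2)=0$ holds precisely when $(kn)^2=n^2$, i.e. when $k=1$. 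Hence $\ke R_{\lambda_n,(kn)^2}=\{0\}$ for all $k\neq 1$, forcing $w_k\equiv 0$ for $k\neq 1$, while for $k=1$ we obtain $w_1\in\spa\{v_1(\cdot;\lambda_n,n^2)\}$.

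The remaining point is to pass from the vanishing of all Fourier modes except $k=1$ back to the full function $w$. Since $w\in X\subset C^{1+\alpha}_{2\pi/n}(\ov\0)$ is even and $2\pi/n$-periodic in $q$, it admits a cosine expansion $w(q,p)=\sum_{k\geq 0}w_k(p)\cos(knq)$ (up to normalization), and the argument above kills every coefficient except $w_1$; I would note that the constant mode $w_0$ is also excluded because $R_{\lambda_n,0}$ has trivial kernel, as $W(0;\lambda_n,0)>0$ by Lemma \ref{L:1} together with $\lambda_n>\lambda_0$. Consequently $w(q,p)=w_1(p)\cos(nq)$ with $w_1=c\,v_1(\cdot;\lambda_n,n^2)$ for some scalar $c$, which is exactly $w=c\,w_n$. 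This shows $\ke\p_{\wt h}\cF(\lambda_n,0)=\spa\{w_n\}$ is one-dimensional, completing the proof.

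I expect the main obstacle to be the justification that the Fourier decomposition is genuinely compatible with membership in $X$ and that the mode-by-mode analysis recovers $w$ itself rather than merely its projections — in particular, confirming that $w_n$ as defined does lie in $X$ (evenness in $q$, the boundary condition $w_n|_{p=p_0}=0$ inherited from $v_1(p_0)=0$, and the $C^{1+\alpha}$ regularity coming from $v_1\in W^2_r\hookrightarrow C^{1+\alpha}$), so that the span is actually realized inside the space. The spectral separation itself is clean once the monotonicity and uniqueness from Lemma \ref{L:6} are in hand.
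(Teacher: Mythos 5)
Your proposal is correct and follows essentially the same route as the paper, whose proof simply cites Lemma \ref{L:2} (Fredholm property), Proposition \ref{P:2} (kernel of $R_{\lambda,\mu}$ via linear dependence of $v_1,v_2$, i.e. vanishing of $W(0;\lambda,\mu)$), and Lemma \ref{L:6} (uniqueness of the zero $\mu(\lambda_n)=n^2$); you have merely filled in the Fourier-mode reduction and the exclusion of the $k=0$ mode via Lemma \ref{L:1}, details the paper leaves implicit. The points you flag as potential obstacles (membership of $w_n$ in $X$ and recovering $w$ from its modes) are indeed routine: $v_1\in W^2_r\hookrightarrow C^{1+\alpha}$, $v_1(p_0)=0$, and completeness of the cosine system settle them exactly as you indicate.
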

\begin{proof}
 The result is a consequence of the  Lemmas \ref{L:2} and \ref{L:6}, and of  Proposition \ref{P:2}.
\end{proof}

In order to apply the theorem on bifurcations from simple eigenvalues to the   equation \eqref{BP}, we still have to verify the transversality condition
\begin{equation}\label{eq:TC}
 \p_{\lambda {\wt h}}\cF(\lambda_n,0)[w_n]\notin\im \p_{\wt h} \cF(\lambda_n,0)
\end{equation}
 for   $n\geq N.$ 
 \begin{lemma}\label{L:TC}
  The transversality condition  \eqref{eq:TC} is satisfied for all {$n\geq N$}.
 \end{lemma}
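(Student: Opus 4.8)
The plan is to apply the Crandall--Rabinowitz theorem \cite{CR71} to \eqref{BP}, for which it remains to check that $\p_{\lambda\wt h}\cF(\lambda_n,0)[w_n]$ lies outside the range of $\p_{\wt h}\cF(\lambda_n,0)$. Since $H=H(\cdot;\lambda)$, $\Gamma$ and $Q$ are independent of $q$, the operator $(L,T)=\p_{\wt h}\cF(\lambda,0)$ of \eqref{L1} leaves each Fourier mode invariant, and by Proposition \ref{P:3} the kernel and cokernel are concentrated in the single mode $\cos(nq)$. I would thus restrict the problem to this mode: writing $w=\phi(p)\cos(nq)$ and using $H_p=1/a(\lambda)$, one computes $Lw=\big((a^3\phi')'-n^2a\phi\big)\cos(nq)$, while, because $(1-\p_q^2)^{-1}\cos(nq)=\cos(nq)/(1+n^2)$, the coefficient of $\cos(nq)$ in $Tw$ equals $\big((g+\sigma n^2)\phi(0)-\lambda^{3/2}\phi'(0)\big)/(\sigma(1+n^2))$. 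Hence, on this mode, $(L,T)$ coincides with the Sturm--Liouville operator $R_{\lambda_n,n^2}$ of Proposition \ref{P:2}, up to the fixed positive factor $1/(\sigma(1+n^2))$ in the boundary component. Throughout I abbreviate $a:=a(\lambda_n)$, $v_1:=v_1(\cdot;\lambda_n,n^2)$ and $v_{1,\lambda}:=\p_\lambda v_1(\cdot;\lambda_n,n^2)$.

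First I would identify the functional whose kernel is $\im R_{\lambda_n,n^2}$. As this operator is Fredholm of index zero with $\ke R_{\lambda_n,n^2}=\spa\{v_1\}$, cf. Proposition \ref{P:2}, its range is a hyperplane. Pairing the equation $R_{\lambda_n,n^2}w=(f,A)$ with $v_1$ and integrating by parts via the Lagrange identity for the formally self-adjoint operator $w\mapsto(a^3w')'-n^2aw$, the boundary contributions at $p_0$ vanish (since $v_1(p_0)=w(p_0)=0$), and those at $0$ reduce through $\lambda_n^{3/2}v_1'(0)=(g+\sigma n^2)v_1(0)$---which holds because $W(0;\lambda_n,n^2)=0$---together with $\lambda_n^{3/2}w'(0)=(g+\sigma n^2)w(0)-A$. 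This produces the solvability condition
\[
\int_{p_0}^0 f v_1\, dp + A\,v_1(0)=0,
\]
so that $(f,A)\in\im R_{\lambda_n,n^2}$ precisely when its left-hand side vanishes.

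Next I would evaluate $\p_{\lambda\wt h}\cF(\lambda_n,0)[w_n]$ by differentiating $(L,T)$ in $\lambda$ with the profile $v_1$ held fixed; in the mode $\cos(nq)$ its data are $f=(3a^2a_\lambda v_1')'-n^2a_\lambda v_1$ and $A=-\tfrac{3}{2}\lambda_n^{1/2}v_1'(0)$, with $a_\lambda=1/(2a)$. Comparison with the right-hand side of \eqref{v1l} shows $f=-\big((a^3v_{1,\lambda}')'-n^2av_{1,\lambda}\big)$, so a second application of the Lagrange identity, now using $v_{1,\lambda}(p_0)=v_{1,\lambda}'(p_0)=0$, yields $\int_{p_0}^0 f v_1\,dp=-\lambda_n^{3/2}\big(v_{1,\lambda}'(0)v_1(0)-v_1'(0)v_{1,\lambda}(0)\big)$. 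Inserting this into the solvability functional, adding $A\,v_1(0)$, and once more using $\lambda_n^{3/2}v_1'(0)=(g+\sigma n^2)v_1(0)$ to recombine the cross term, the expression collapses to
\[
\int_{p_0}^0 f v_1\, dp + A\,v_1(0)=-v_1(0)\,W_\lambda(0;\lambda_n,n^2),
\]
in which one recognises the formula for $W_\lambda$ established in the proof of Lemma \ref{L:5}.

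Finally, $v_1$ is strictly increasing with $v_1(p_0)=0$ (cf. the proof of Lemma \ref{L:1}), so $v_1(0)>0$, while $W_\lambda(0;\lambda_n,n^2)>0$ by Lemma \ref{L:5}; consequently the solvability functional takes the nonzero value $-v_1(0)\,W_\lambda(0;\lambda_n,n^2)$ at $\p_{\lambda\wt h}\cF(\lambda_n,0)[w_n]$, which is exactly the transversality condition \eqref{eq:TC}. The step I expect to be most delicate is the bookkeeping of the mode reduction rather than any individual computation: one must confirm that the codimension-one range of $(L,T)$ on $Y_1\times Y_2$ is detected precisely by the scalar functional above---reading $\int_{p_0}^0 f v_1\,dp$ as the distributional $Y_1$-pairing in divergence form, not as a pointwise integral, and carrying along the positive factor $1/(\sigma(1+n^2))$ generated by $(1-\p_q^2)^{-1}$ in $T$, which merely rescales $A$ by $\sigma(1+n^2)$. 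The Fredholm framework behind Lemma \ref{L:2} makes this pairing rigorous, and since every stray constant is positive, none of them can affect the nonvanishing.
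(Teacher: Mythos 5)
Your proposal is correct and takes essentially the approach the paper intends: the paper omits the argument and cites Lemmas 4.4 and 4.5 of \cite{MM13x}, which proceed exactly as you do --- reduce to the $\cos(nq)$ mode where $(L,T)$ acts as $R_{\lambda_n,n^2}$ up to the factor $1/(\sigma(1+n^2))$, characterize $\im R_{\lambda_n,n^2}$ by the solvability functional $(f,A)\mapsto\int_{p_0}^0 fv_1\,dp+Av_1(0)$ obtained from the Lagrange identity, and show via \eqref{v1l} that its value at the mixed derivative equals $-v_1(0)\,W_\lambda(0;\lambda_n,n^2)\neq 0$ by Lemma \ref{L:5}. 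All of your computations (the mode reduction of $T$, the data $f=(3a^2a_\lambda v_1')'-n^2a_\lambda v_1$, $A=-\tfrac32\lambda_n^{1/2}v_1'(0)$, and the final identity) check out, including your caveat that the $Y_1$-pairing must be read in divergence form.
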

\begin{proof}
 The proof is similar to that of the Lemmas 4.4 and 4.5 in \cite{MM13x}, and therefore we omit it.
\end{proof}

We come to the proof of our main existence result.
\begin{proof}[Proof of Theorem \ref{T:MT}]
{Let $N$ be    defined by \eqref{eq:rest},       and let $(\lambda_n)_{n\geq N}\subset (\lambda_0,\infty)$
be the sequence defined by \eqref{eq:sec}.}
 Invoking the relations \eqref{BP0}, \eqref{BP1}, the Proposition \ref{P:3}, and the Lemma \ref{L:TC}, we see 
 that all the assumptions of the theorem on bifurcations from simple eigenvalues of Crandall and Rabinowitz \cite{CR71} 
 are satisfied for the equation \eqref{BP} at each of the points $\lambda=\lambda_n,$  $n\geq N.$
 Therefore,    for each  $n\geq N$, there exists   $\e_n>0$ and a real-analytic  curve 
\[\text{$(\wt \lambda_n,{\wt h}_n):(\lambda_n-\e_n,\lambda_n+\e_n)\to  (2\G_M,\infty)\times X,$  }\]
consisting only of solutions of the problem
 \eqref{BP}.
Moreover, as $s\to0$,  we have that
\begin{equation}\label{asex}
\wt\lambda_n(s)=\lambda_n+O(s)\quad \text{in $\R$},\qquad {\wt h}_n(s)=sw_n+O(s^2)\quad \text{in $X$},
\end{equation}
whereby $w_n\in X$ is the function defined in Proposition \ref{P:3}.
Furthermore, in a neighborhood of $(\lambda_n,0),$ the solutions of \eqref{BP} are either laminar or are located on the local curve $(\wt\lambda_n,{\wt h}_n)$.
The constants $\e_n$ are chosen sufficiently small to guarantee that $H(\cdot;\wt\lambda_n(s))+{\wt h}_n(s)$ satisfies \eqref{PBC} for all $|s|<\e_n$ and all $n\geq N.$
For each integer $n\geq N,$ the curve ${\cC_{n}}$ mentioned in Theorem \ref{T:MT} is
parametrized by $[s\mapsto H(\cdot;\wt\lambda_n(s))+{\wt h}_n(s)]\in C^\omega((-\e_n,\e_n),X).$

We pick now a function $h$ on one of the  local curves ${\cC_{n}}$.
In order to show that this  weak solution   of \eqref{WF} belongs to $ W^2_r(\0)$, we first infer from Theorem 5.1 in \cite{MM13x} 
that the distributional derivatives $\p_q^mh$ also belong to $C^{1+\alpha}(\ov\0)$ for all $m\geq1.$
Using the same arguments as in the last part of the proof of Theorem \ref{T:EQ}, we find that   $h\in C^{1+\alpha}(\ov\0)\cap W^2_r(\0)$ 
satisfies the first equation of  \eqref{PB} in $L_r(\0)$.
Because $(1-\p_q^2)^{-1}\in \kL(C^\alpha(\s), C^{2+\alpha}(\s))$, the equation \eqref{PB0} yields that $\tr_0 h\in C^{2+\alpha}(\s)$, and 
therefore $h$ is a strong solution of  \eqref{PB}.
Moreover,   by  \cite[Corollary 5.2]{MM13x}, result which shows that the regularity 
properties of the streamlines  of classical solutions  \cite{Hen10, DH12} persist even for weak solutions with merely integrable vorticity,   
$[q\mapsto h(q,p)]$ is a real-analytic map  for any $p\in[p_0,0]$.
Finally, because of \eqref{asex}, it is not difficult to see that any 
solution $h=H(\cdot;\wt \lambda_n(s))+{\wt h}_n(s)\in{\cC_{n}},$ with $s\neq0 $  sufficiently small, corresponds
to waves that possess a single crest per period  and which are 
symmetric with respect to the crest (and trough) line.
\end{proof}

As noted in the discussion following Lemma \ref{L:1}, when $r\in(1,3),$   
there are examples of vorticity functions $\gamma\in L_r((p_0,0))$ for which the mapping $\lambda\mapsto\mu(\lambda)$ defined in Lemma \ref{L:6} is
bounded away from zero on
$(\lambda_0,\infty)$.
This property imposes restrictions (through the { positive integer $N$}) on the wave length of the water waves solutions bifurcating from the laminar flows, 
cf. Theorem \ref{T:MT}.

The lemma below gives, in the context of capillary-gravity waves,  
sufficient conditions which ensure that $\mu:(\lambda_0,\infty)\to(0,\infty)$ is a bijective mapping, 
which corresponds to the choice $N=1$ in Theorem \ref{T:MT},
situation when no restrictions are needed.
On the other hand, when considering pure capillary waves and if $\mu:(\lambda_0,\infty)\to(0,\infty)$ is a 
bijective mapping, then necessarily $\Gamma_M=\G(p_0),$ and the problems \eqref{ERU} and \eqref{ERUa} become singular as $\lambda\to \lambda_0=2\G_M.$
Therefore,  finding sufficient conditions in this setting appears to be  much more involved.
 \begin{lemma}\label{L:9}
 Let $r\geq3$, $\gamma\in L_r((p_0,0))$ and assume that $g>0$. 
 Then,  $\lambda_0>2\G_M$ and {the integer $N$ in Theorem \ref{T:MT} satisfies $N=1$}, provided that
 \begin{align}\label{eq.condCG}
  \int_{p_0}^0a(\lambda_0)\left(\int_{p_0}^p\frac{1}{a^3(\lambda_0;s)}\, ds\right)^2\, dp<\frac{\sigma}{g^2}.
 \end{align}
\end{lemma}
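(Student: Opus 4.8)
The plan is to show that hypothesis \eqref{eq.condCG} is precisely the statement $W_\mu(0;\lambda_0,0)<0$, and that together with $\lambda_0>2\G_M$ this forces the branch $\mu(\lambda)$ of Lemma \ref{L:6} to satisfy $\mu(\lambda)\to0$ as $\lambda\searrow\lambda_0$; then $\inf_{(\lambda_0,\infty)}\mu=0$, so that $\mu\colon(\lambda_0,\infty)\to(0,\infty)$ is bijective and $N=1$ by \eqref{eq:rest}.

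First I would establish $\lambda_0>2\G_M$, using only $r\geq3$ and $g>0$. From the identity
\[
W(0;\lambda,0)=a^3(\lambda;p_0)\Big(1-g\int_{p_0}^0\frac{1}{a^3(\lambda;p)}\,dp\Big)
\]
in the proof of Lemma \ref{L:1}, it suffices to show $\int_{p_0}^0(\lambda-2\G(p))^{-3/2}\,dp\to\infty$ as $\lambda\searrow2\G_M$, since this makes $W(0;\lambda,0)<0$ for $\lambda$ near $2\G_M$ and forces the minimal $\lambda_0$ to exceed $2\G_M$. Let $p_*\in[p_0,0]$ maximize $\G$. As $\gamma\in L_r((p_0,0))$ with $r\geq3$, Morrey's embedding $W^1_r((p_0,0))\hookrightarrow C^{\alpha}([p_0,0])$, $\alpha=(r-1)/r\geq2/3$, yields $\G_M-\G(p)\leq C|p-p_*|^{\alpha}$ near $p_*$. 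Writing $\lambda-2\G(p)=(\lambda-2\G_M)+2(\G_M-\G(p))\leq(\lambda-2\G_M)+2C|p-p_*|^\alpha$, the integral over a neighbourhood of $p_*$ is bounded below by $\int\big((\lambda-2\G_M)+2C|p-p_*|^\alpha\big)^{-3/2}\,dp$, which by monotone convergence tends to $\int(2C|p-p_*|^\alpha)^{-3/2}\,dp=+\infty$, because $3\alpha/2\geq1$. This is the step where $r\geq3$ enters in an essential way, and I expect it to be the main obstacle.

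Once $\lambda_0>2\G_M$ is known, continuity of $\lambda\mapsto W(0;\lambda,0)$ and the minimality of $\lambda_0$ force $W(0;\lambda_0,0)=0$, equivalently $g\int_{p_0}^0 a^{-3}(\lambda_0)\,dp=1$. At this zero the solutions $v_1,v_2$ are collinear, so the computation leading to \eqref{qqq}—which uses only $W(0;\lambda_0,0)=0$ and not $\mu>0$—applies verbatim at $(\lambda_0,0)$ and gives
\[
W_\mu(0;\lambda_0,0)=\frac{1}{v_1(0)}\Big(\int_{p_0}^0 a(\lambda_0)v_1^2\,dp-\sigma v_1^2(0)\Big),
\]
where now $v_1(p)=a^3(\lambda_0;p_0)\int_{p_0}^p a^{-3}(\lambda_0;s)\,ds$ is the solution of \eqref{ERU} with $\mu=0$. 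Substituting this explicit $v_1$, dividing by $a^6(\lambda_0;p_0)>0$, and using $\int_{p_0}^0 a^{-3}(\lambda_0)\,dp=1/g$ so that $\sigma v_1^2(0)/a^6(\lambda_0;p_0)=\sigma/g^2$, the inequality $W_\mu(0;\lambda_0,0)<0$ becomes exactly \eqref{eq.condCG}. The same computation as in Lemma \ref{L:5}, evaluated at $\mu=0$, also gives $W_\lambda(0;\lambda_0,0)=\tfrac{3}{2v_1(0)}\int_{p_0}^0 a(\lambda_0)v_1'^2\,dp>0$.

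Finally I would feed these sign facts into the implicit function theorem. Since $W(0;\cdot,\cdot)$ is real-analytic on $(2\G_M,\infty)\times[0,\infty)$ with $W(0;\lambda_0,0)=0$ and $W_\mu(0;\lambda_0,0)<0$, there is a unique real-analytic branch $\lambda\mapsto\wt\mu(\lambda)$ of zeros through $(\lambda_0,0)$ with $\wt\mu(\lambda_0)=0$ and $\wt\mu'(\lambda_0)=-W_\lambda/W_\mu>0$, hence $\wt\mu(\lambda)>0$ for $\lambda$ slightly above $\lambda_0$. By the uniqueness of the positive zero in Lemma \ref{L:6}, this branch coincides with $\mu(\lambda)$ for $\lambda>\lambda_0$ near $\lambda_0$, so $\mu(\lambda)\to0$ as $\lambda\searrow\lambda_0$. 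As $\mu$ is strictly increasing, $\inf_{(\lambda_0,\infty)}\mu(\lambda)=0$, whence $\mu\colon(\lambda_0,\infty)\to(0,\infty)$ is bijective and the smallest integer with $N^2>\inf\mu=0$ is $N=1$, which is the assertion.
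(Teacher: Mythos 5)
Your proposal is correct, and its two main steps coincide with the paper's: you prove $\lambda_0>2\G_M$ by showing $\int_{p_0}^0 a^{-3}(\lambda)\,dp\to\infty$ as $\lambda\searrow 2\G_M$ using the H\"older/Morrey estimate $\G_M-\G(p)\le C|p-p_*|^\alpha$ and $3\alpha/2\ge 1$ (exactly the paper's use of $r\ge3$), and you identify \eqref{eq.condCG} with $W_\mu(0;\lambda_0,0)<0$ by evaluating \eqref{qqq} at $(\lambda_0,0)$ with the explicit $\mu=0$ solution $v_1=a^3(\lambda_0;p_0)\int_{p_0}^p a^{-3}(\lambda_0;s)\,ds$ and the normalization $g\int_{p_0}^0a^{-3}(\lambda_0)\,dp=1$ — this is precisely what the paper leaves as ``one can easily see.'' The only divergence is the final step: the paper argues by continuity that $W(0;\lambda_0,\inf_{(\lambda_0,\infty)}\mu)=0$ and then rules out $\inf\mu>0$ by the sign information of Lemma \ref{L:5}, whereas you run the implicit function theorem at the corner point $(\lambda_0,0)$, using $W_\lambda(0;\lambda_0,0)>0$ (obtained from Lemma \ref{L:5}'s computation at $\mu=0$, which is legitimate since it only needs $W(0;\lambda_0,0)=0$) to get a branch $\wt\mu$ with positive slope, and then identify it with $\mu(\cdot)$ via the uniqueness in Lemma \ref{L:6}. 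Both mechanisms are sound and of comparable length; your variant needs $W$ extended analytically to $\mu$ in a neighborhood of $0$ (harmless, as \eqref{ERU}--\eqref{ERUa} are defined for all $\mu\in\R$), while the paper's variant implicitly needs Lemma \ref{L:5} at $\ov\lambda=\lambda_0$ rather than $\ov\lambda>\lambda_0$ (also harmless, since at $\lambda_0$ one has $g\int_{p_0}^0 a^{-3}\,dp=1$, which is all the proof of that lemma uses). Your route has the small advantage of making the limiting behavior $\mu(\lambda)\to0$ as $\lambda\searrow\lambda_0$ explicit, rather than only the value of the infimum.
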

\begin{proof}
 Let us assume that $\G(p_1)=\G_M$ for some $p_1\in[p_0,0)$ (the case when $p_1=0$ is similar). 
 Then, if $\delta<1$ is such that $p_1+\delta<0,$ we have
 \begin{align*}
  \lim_{\lambda\searrow2\G_M}\int_{p_0}^0 \frac{dp}{a^3(\lambda;p)}&=\lim_{\e\searrow 0}\int_{p_0}^0\frac{dp}{\sqrt{\e+2(\Gamma(p_1)-\Gamma(p))}^3}\geq
  c\lim_{\e\searrow 0}\int_{p_1}^{p_1+\delta}\frac{dp}{\e^{3/2}+\left|\int_{p_1}^p\gamma(s)\, ds\right|^{3/2}}\\
  & \geq c\lim_{\e\searrow 0}\int_{p_1}^{p_1+\delta}\!\frac{dp}{\e^{3/2}+\left\| \gamma \right\|_{L_r}^{3/2}|p-p_1|^{3\alpha/2}}\geq 
  c\lim_{\e\searrow 0}\int_{p_1}^{p_1+\delta}\!\frac{dp}{\e+p-p_1}=\infty
 \end{align*}
with $\alpha=(r-1)/r$ and with $c$ denoting positive constants that are independent of $\e$.
We have used the relation $3\alpha/2\geq1$ for $r\geq3.$
In view of Lemma \ref{L:1}, we  find  that $\lambda_0>2\G_M$ is the unique zero of $W(0;\cdot,0).$  
Recalling now \eqref{qqq} and the relation \eqref{v1}, one can easily see, because of $W(0;\lambda_0,0)=0,$ that the condition  \eqref{eq.condCG} yields  $W_\mu(0;\lambda_0,0)<0$.   
Since Lemma \ref{L:6} implies $W(0;\lambda_0, \inf_{(\lambda_0,\infty)}\mu)=0,$ the   relation $W_\mu(0;\lambda_0,0)<0$ together with Lemma \ref{L:5} guarantee that  $\inf_{(\lambda_0,\infty)}\mu=0$.
This proves the claim.
\end{proof}

\end{document}